\numberwithin{equation}{section}
\newcounter{mnote}
\newtheorem{theorem}{Theorem}[section]
\newtheorem{corollary}[theorem]{Corollary}
\newtheorem{lemma}[theorem]{Lemma}
\newtheorem{proposition}[theorem]{Proposition}
\def\csubA{c_{\text{A}}}
\def\csubL{c_{\text{L}}}
\def\csubS{c_{\text{S}}}
\def\cA{\mathcal{A}}
\def\calA{\mathcal{A}}
\def\calL{\mathcal{L}}
\def\calS{\mathcal{S}}
\def\calV{\mathcal{V}}
\def\cP{\mathcal{P}}
\def\k0{\kappa_0}
\def\lgl{\langle}
\def\rgl{\rangle}
\def\bR{{\mathbb R}}
\def\lgl{\langle}
\def\rgl{\rangle}
\def\mF{\Phi}
\def\tc{\tilde{c}}
\newcommand{\B}[3]{\left(B\left(#1,#2\right),#3\right)}
\newcommand{\de}{\delta}
\newcommand{\De}{\Delta}
\begin{document}

\title[Unified approach to determining
forms]
{A unified approach to determining
forms for the 2D Navier-Stokes equations -- the general interpolants case}
\author[C. Foias]{Ciprian Foias$^{1}$}
\address{$^1$Department of Mathematics\\
Texas A\&M University\\ College Station, TX 77843, USA.}
\author[M.S. Jolly]{Michael S. Jolly$^{2}$}
\address{$^2$Department of Mathematics\\
Indiana University\\ Bloomington, IN 47405, USA.}
\author[R. Kravchenko]{Rostyslav Kravchenko$^{3,\dagger}$}
\address{$^3$ Department of Mathematics\\
Chicago University \\Chicago, IL 60637, USA. }
\author[E. S. Titi]{Edriss S. Titi$^{4}$}
\address{$^4$ Department of Computer Science and Applied Mathematics\\
Weizmann Institute of Science\\
Rehovot, 76100, Israel.\\ ALSO: Department of Mathematics and Department
of Mechanical and Aerospace Engineering \\
University of California \\
Irvine, California 92697, USA.
}
\address{$\dagger$ corresponding author}
\email[E. S. Titi] {etiti@math.uci.edu}

\date{November 7, 2013. {\bf To appear in:} {\it Russian Mathematical Surveys.}}
\subjclass[2010]{76D05,34G20,37L05, 37L25}
\keywords{Navier-Stokes equations, determining forms, determining modes, determining nodes, inertial manifolds, dissipative dynamical systems.}
\begin{abstract}

In this paper we show that the long time dynamics (the global attractor) of the 2D Navier-Stokes equation is embedded in the long time dynamics  of an ordinary differential equation, named {\it determining form}, in a space of trajectories which is isomorphic to $C^1_b(\bR; \bR^N)$, for $N$ large enough depending on the physical parameters of the Navier-Stokes equations. We present a unified approach based on  interpolant operators that are induced by any of the determining parameters for the Navier-Stokes equations, namely, determining nodal values, Fourier modes, finite volume elements,  finite elements, etc... There are two immediate and interesting consequences of this unified approach. The first is that the constructed determining form has a Lyapunov function, thus its solutions converge, as time goes to infinity,  to the set of steady states of the determining form. The second is that these steady states of the determining form are identified, one-to-one, with the trajectories on the global attractor of the Navier-Stokes equations. It is worth adding that this unified  approach is general enough that it applies, in an almost straightforward manner, to a whole class of dissipative dynamical systems.

\end{abstract}
\maketitle

\begin{center}
{\it Dedicated to the memory of Professor Mark Vishik}
\end{center}

\section{Introduction}

The 2D Navier-Stokes equation (NSE), \eqref{NSES} and \eqref{NSE}, in addition to being a fundamental component of
many fluid models, is intriguing for several theoretical reasons.  In featuring both a direct cascade of
enstrophy and an inverse cascade of energy, it displays more complicated turbulence phenomena
than does the 3D NSE \cite{Batchelor,Kraichnan,Leith}.  Also unlike in 3D, the global existence theory
for the 2D NSE is complete (see, e.g., \cite{CF88,T97}).  In fact, the long time dynamics of
the 2D NSE is entirely contained in the {\it global attractor} $\calA$,  \eqref{attractordef}, a compact finite-dimensional set within the  infinite-dimensional phase space $H$, of solenoidal finite energy vector fields, (see, e.g., {\cite{CF88,Foias2001,Hale,T97}). Sharp estimates concerning the dimension of the global attractor in terms of the relevant physical parameters were first established in  \cite{CFT89} (see also \cite{CF88,Liu,T97} and references therein).   If there were an {\it inertial manifold}, $\mathcal{M}$, i.e., a Lipschitz, finite-dimensional, forward invariant manifold, which attracts each bounded set at an exponential rate,    then  $\calA \subset \mathcal{M}$; and the dynamics on $\mathcal{A}$ would be captured  by an ordinary differential equation (ODE), called an {\it inertial form}, in a
finite-dimensional phase space \cite{CF88,FST,FSTi,T97}.  This is achieved   through the reduction of the original evolution equation to the inertial manifold $\mathcal{M}$.   Yet the existence of an inertial manifold for the 2D NSE has been an problem open since the 1980s!!

This is rather surprising as there are even stronger indicators of finite dimensional behavior
for the 2D NSE.  Solutions in $\calA$ are determined by the asymptotic behavior of a sufficient, finite number determining parameters.   If, in the limit, as $t \to \infty$, a certain large enough number of low Fourier modes, or  nodal values, or finite volume elements for two solutions in $\calA$ converge to each other, then those solutions coincide (see, e.g., \cite{Cockburn-Jones-Titi} for a unified theory of determining parameters and projections). This is equivalent, at least in the case of Fourier modes, to the following:
\begin{equation}\label{equiv def}
\begin{aligned}
&\text{If two complete trajectories in the global attractor coincide}\\
& \text{upon projection $P_m$ onto a sufficient large number, $m$,} \\
& \text{of the low modes, then they are the same trajectory.}
\end{aligned}
\end{equation}

This notion of {\it determining modes}, which was introduced in \cite{FP} (see also, \cite{Jones-Titi1993} for sharp estimates regarding the number of determining modes), was used in \cite{FJKrT1} to construct a system of ODE in the Banach space $X=C_b(\bR, P_m H)$, governing the evolution of trajectories in the space $X$. We call that system of ODEs
a {\it determining form}.   Trajectories in the global attractor, $\mathcal{A}$, of the 2D NSE, \eqref{NSE}, are identified with traveling wave solutions of that  determining form.
There are, conceptually, two time variables in play for the determining form:
the evolving time for the ODE
and the original time variable of the NSE that now parameterizes complete trajectories
in $X$.  Though that determining form has an infinite dimensional phase space, the vector field that governs the evolution is globally Lipschitz; so the determining form is an ODE
in the true sense.  The key to constructing that determining form in \cite{FJKrT1} is to extend the mapping provided by \eqref{equiv def} $W:P_m\calS \to (I-P_m)\calS$ on the {\it set} $\calS$ of complete trajectories in $\calA$, to the space $X$.  The extended map is shown to be Lipschitz,
and its image plays the role of recovering  the higher modes, while the evolving trajectory in $X$ represents the lower modes.

The determining form in this paper has an entirely different character.
It is a  system which possesses a Lyapunov function and whose steady states are precisely the trajectories
in the global attractor of the 2D NSE.  It is more general in that it can be used with
a variety of determining parameters, including nodal values, as well as Fourier modes. Furthermore, it provides a general framework and strategy that can be implemented for other dissipative systems. Like the determining form in \cite{FJKrT1}, the key to its construction is the extension of a map $W$, defined at first only for projections of trajectories in $\calA$.
This is done using the feedback control term added to the NSE suggested in \cite{Azouani-Olson-Titi,ATi},
which involves an interpolating operator $J_h$ approximating the identity map at the level $h$  (for instance, $J_h$, can be based on nodal values, where $h$ represents the grid size). This construction and the statements of our main results are presented in section \ref{Statement-Main-Results}. In section \ref{Funct-Setting} we provide some preliminary background material and useful inequalities concerning the Navier-Stokes equations. In sections \ref{Prop-equality} and \ref{MainProof} are dedicated for the details of the proofs of our main results.


\section{Functional Setting and the Navier-Stokes Equations\label{Funct-Setting}}
We consider the  two-dimensional incompressible Navier-Stokes equations (NSE)
\begin{equation}\label{NSES}
\begin{aligned}
&\frac{\partial u}{\partial t} -
\nu \Delta u  + (u\cdot\nabla)u + \nabla p = \mF \\
&\text {div} u = 0 \\
& \int_{\Omega} u\,  dx =0 \;,\qquad \int_{\Omega} \mF\,  dx =0 \\
& u(0,x) = u_0(x),
\end{aligned}
\end{equation}
subject to periodic boundary conditions with basic domain $\Omega =[0,L]^2$. The  velocity field, $u$, and the pressure, $p$, are the unknown functions, while $\mF$ is a given forcing term, and $\nu>0$ is a given constant viscosity.

Let us denote by
$$
\calV=\{\phi:\, \phi\,\, \text{is}\,\, \mathbb{R}^2-{\hbox{valued trigonometric polynomials}}, \,\, \nabla \cdot \phi = 0, \, \text{ and} \, \int_{\Omega} \phi(x)\, dx =0  \},
$$
For any  subset $Z \subset  L^1_{\rm{per}}(\Omega)$, we will denote  by $\dot{Z}=\{\phi\in Z:\, \int_\Omega \phi(x)\,dx =0\}$. We denote by $H$ and $V$ the closures of $\calV$ in  $(L^2_{\rm{per}}(\Omega))^2$  and $(H^1_{\rm{per}}(\Omega))^2$, respectively. The inner product and norm in the Hilbert spaces  $(L^2_{\rm{per}}(\Omega))^2$ and $H$ will be denoted by $(\cdot,\cdot)$ and $|\cdot|$, respectively; and the corresponding inner product and norm in the Hilbert spaces $(\dot{H}^1_{\rm{per}}(\Omega))^2$ and $V$ will be denoted by $((\cdot,\cdot))$ and
$\|\cdot\|$, respectively. Specifically, for every $u,v \in (\dot{H}^1_{\rm{per}}(\Omega))^2$ we set:
$$
((u,v)) = \sum_{i,j=1}^2 \int_\Omega \frac{\partial u_i(x)}{\partial x_j} \frac{\partial v_i(x)}{\partial x_j} \, dx.
$$
We will also denote by $V'$ the dual space of the space $V$.

Using the above functional notation the Navier-Stokes equations can
 be written as an  evolution equation in the Hilbert space $H$   (cf.  \cite{CF88,T97})
%
\begin{equation}\label{NSE}
\begin{aligned}
&\frac{d}{dt}u(t) + \nu Au(t) + B(u(t),u(t)) = f, \,\, \text{for}\, t >0,\\
& u(0)=u_0.
\end{aligned}
\end{equation}
The Stokes operator $A$, the bilinear operator $B$, and force $f$ are defined as
\begin{equation}
\label{nabla1}
A=-\cP\Delta\;, \quad  B(u,v)=\cP\left( (u \cdot \nabla) v \right)\;,\quad f=\cP \mF\;,
\end{equation}
where $\cP$ is the Helmholtz-Leray orthogonal projector from
$(\dot{L}^2_{\rm{per}}(\Omega))^2$ onto $H$, and where $u,v$ are sufficiently smooth such that $B(u,v)$ makes sense. In this paper we will consider  $f \in V$.

We observe that $D(A)=(\dot{H}^2_{\rm{per}}(\Omega))^2\cap V$. The operator $A$ is self-adjoint, with compact inverse. Therefore, the space $H$ possesses an orthonormal basis $\{w_j\}_{j=1}^\infty$ of eigenfunctions of $A$, namely,  $A w_j=\lambda_j w_j$, with $0<\lambda _1=\left(2\pi/ L\right)^2
\leq \lambda _2\leq\lambda_3 \le \cdots$ \; (cf. \cite{CF88,T97}).
The powers, $A^\alpha$, are defined
$$
A^\alpha v = \sum_{j=1}^\infty \lambda_j^\alpha (v,w_j) w_j.
$$
Observe that all powers of $A$ commute with $\cP$. We will also observe that  $V:=D(A^{1/2})$ and that
\begin{equation}\label{Vnorm}
\|u\|=|A^{1/2}u|= \left(\sum_{j=1}^{\infty} \lambda_j(u,w_j)^2 \right)^{1/2}.
\end{equation}

It is well known that the NSE \eqref{NSE} has a  global attractor
\begin{equation}\label{attractordef}
\mathcal{A}=\{u_0\in H: \, \exists \, \text{ a solution}\, u(t,u_0)\, \text{of \eqref{NSE}} \, \, \forall t\in\mathbb{R},\,\, \sup_t\|u(t)\|<\infty\};
\end{equation}
that is, $\mathcal{A}$ is the maximal bounded invariant subset in $V$ under the NSE dynamics, or equivalently it is the minimal compact set in
$V$ which attracts uniformly all bounded sets of $V$ under the dynamics of \eqref{NSE}. In particular, it is also known that
\begin{equation}\label{Grashof}
\mathcal{A} \subset \{u\in V: \|u\|\leq  G\nu\kappa_0\}, \quad \text{where} \quad G=\frac{|f|}{\nu^2\k0^2}.
\end{equation}
$G$ is the  Grashof number, a dimensionless physical parameter,  and $\k0=\lambda_1^{1/2}=2\pi/L$. For the above properties see, e.g., \cite{CF88,Foias2001,Hale,T97}.

Next, we introduce number of identities satisfied by the bilinear term.
This includes  the orthogonality relations
\begin{equation}\label{flip}
\lgl B(u,v),w \rgl = -\lgl B(u,w),v\rgl\;, \quad u,v,w \in V\;,
\end{equation}
where $\lgl \cdot, \cdot \rgl$ denotes the dual action between $V'$ and $V$;
and
\begin{equation}\label{ortho}
(B(u,u),Au) = 0\;,\quad u \in D(A)\;,
\end{equation}
(see, e.g., \cite{CF88,Foias2001,T97}).
Relation  \eqref{flip} implies
(cf. \cite{CF88,T97})
\begin{equation}\label{moveu}
(B(v,v),Au) + (B(v,u),Av) + (B(u,v),Av) = 0\;,\quad u,v \in D(A)\;.
\end{equation}

We note that, hereafter, $c,c_A,c_B,c_L,c_S,c_T,c_1,c_2,\tilde{c}_1, \tilde{c}_2,\dots$ will denote universal dimensionless positive constants. Our estimates for the nonlinear term   will involve Agmon's inequality
\begin{equation}\label{Agmon}
\|u\|_{\infty} \leq \csubA|u|^{1/2} |Au|^{1/2}\;, \quad u \in D(A)\;,
\end{equation}
the Sobolev and Ladyzhenskaya  inequalities
\begin{align}
&\left\| u\right\|_{L^4(\Omega)} \le \csubS\left\| u\right\|_{H^{1/2}(\Omega)} \quad {\hbox{for every}} \, u \in (\dot{H}_{\rm{per}}^{1/2}(\Omega))^2, \label{Sobolev}\\
&\left\| u\right\|_{H^{1/2}(\Omega)} \le \tilde{\csubL} \left|u\right|^{1/2} \left\|u\right\|^{1/2} \quad {\hbox{for every}} \, u \in (\dot{H}_{\rm{per}}^1(\Omega))^2 \label{LadyzhenskayaTilde}\;,
\end{align}
which yields
\begin{align}
\left\| u\right\|_{L^4(\Omega)} \le \csubL \left|u\right|^{1/2} \left\|u\right\|^{1/2} \quad {\hbox{for every}} \, u \in (\dot{H}_{\rm{per}}^1(\Omega))^2 \label{Ladyzhenskaya}\;.
\end{align}
We will use the following versions of the Poincar\'e inequality
\begin{equation}\label{fuit}
\kappa_0|v|\leq\|v\|, \quad \forall \  u\in V, \quad  {\hbox{and}} \quad \kappa_0\|v\|\leq|Av|, \quad
\forall \ u\in D(A)\;,
\end{equation}
as well as Young's inequality
\begin{equation}\label{Young}
ab \le \frac{a^p}{p}+\frac{b^q}{q} \quad
\text{for} \quad a,b,p,q > 0 \quad \text{and} \quad  \frac{1}{p}+\frac{1}{q}=1\;.
\end{equation}

By virtue of \eqref{Ladyzhenskaya} one has
\begin{equation}\label{A46a}
|(B(u,v),w)|\leq \csubS|u|^{1/2}\|u\|^{1/2}\|v\|^{1/2}|Av|^{1/2}|w|,\
\forall \ u\in V, v\in D(A), w \in H,
\end{equation}
and thanks to \eqref{Agmon}
\begin{equation}\label{A46b}
|(B(u,v),w)|\leq \csubA|u|^{1/2}|Au|^{1/2}\|v\||w|,
\quad \forall \,u\in D(A), v\in V, w \in H.
\end{equation}
In addition, we have
\begin{equation}\label{Titi_1}
|(B(w,u),v)|\leq c_T\|w\|\|u\|\left(\log\frac{e|Au|}{\kappa_0\|u\|}\right)^{1/2}|v|,\ \forall \ u\in D(A), v\in H, w \in V,
\end{equation}
(see \cite{Titi}).
Using the Br\'ezis-Gallouet inequality \cite{Brezis-Gallouet} (see also a different proof \cite{Titi}) one also has
\begin{equation}\label{Brezis}
|(B(w,u),v)|\leq c_B\|w\|\|u\|\left(\log\frac{e|Aw|}{\kappa_0\|w\|}\right)^{1/2}|v|, \ \forall \ u\in V, v\in H, w \in D(A).
\end{equation}

We will use the following modified Gronwall inequality from \cite{Jones-Titi1992} (see also \cite{Foias2001}).
\begin{lemma}  \label{modGronwall}
Let $\alpha$, $\beta$ be locally integrable real-valued functions on $(0,\infty)$,
satisfying for some $T\in(0, \infty)$
\begin{align*}
&\liminf_{t\to \infty} \frac{1}{T} \int_t^{t+T} \alpha(\tau) \ d\tau = \gamma > 0\;, \quad
\limsup_{t\to \infty} \frac{1}{T} \int_t^{t+T} \alpha^-(\tau) \ d\tau < \infty\;, \\
& \text{and} \qquad  \lim_{t\to \infty} \frac{1}{T} \int_t^{t+T} \beta^+(\tau) \ d\tau =0\;,
\end{align*}
where $\alpha^-=\max\{-\alpha, 0\}$ and $\beta^+=\max\{\beta,0\}$.  Suppose
that $\xi$ is an absolutely continuous, non-negative function on $(0,\infty)$ such that
$$
\frac{d}{dt} \xi + \alpha \xi \le \beta, \quad \text{a.e. on} \ (0, \infty)\;.
$$
Then $\xi(t) \to 0$ as $t \to \infty$.
\end{lemma}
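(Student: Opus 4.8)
The plan is to treat the differential inequality as an equality solvable by an integrating factor and then analyze the resulting variation-of-constants bound using the three hypotheses on $\alpha$ and $\beta$. Fix a base point $t_0>0$ and set $y(t)=\exp\big(\int_{t_0}^t\alpha(\tau)\,d\tau\big)$, which is positive and absolutely continuous on compact intervals. Multiplying $\frac{d}{dt}\xi+\alpha\xi\le\beta$ by $y$ gives $\frac{d}{dt}(y\xi)\le y\beta$ a.e., and integrating from $t_0$ to $t$ yields
\[
0\le \xi(t)\le \xi(t_0)\exp\Big(-\int_{t_0}^t\alpha\Big)+\int_{t_0}^t\exp\Big(-\int_s^t\alpha\Big)\beta(s)\,ds,
\]
where the left inequality is just the assumed nonnegativity of $\xi$. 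Since $\beta\le\beta^+$ and the exponential kernel is positive, I may replace $\beta$ by $\beta^+$ in the last integral. It therefore suffices to show that both terms on the right tend to $0$ as $t\to\infty$.

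The key preliminary step is a linear-in-time lower bound for the integral of $\alpha$. Using $\liminf_{t\to\infty}\frac1T\int_t^{t+T}\alpha=\gamma>0$ together with $\limsup_{t\to\infty}\frac1T\int_t^{t+T}\alpha^-<\infty$, I fix $t_*\ge t_0$ large enough that $\int_\tau^{\tau+T}\alpha\ge \tfrac{\gamma}{2}T$ and $\int_\tau^{\tau+T}\alpha^-\le MT$ for all $\tau\ge t_*$ and some constant $M$. Writing $t-s\in[nT,(n+1)T)$ and summing the first bound over the $n$ full subintervals while controlling the last partial interval by the $\alpha^-$ bound, I obtain
\[
\int_s^t\alpha\ge \tfrac{\gamma}{2}(t-s)-C_0,\qquad t\ge s\ge t_*,
\]
for a constant $C_0$ depending only on $\gamma,M,T$. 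Consequently the kernel satisfies $\exp\big(-\int_s^t\alpha\big)\le e^{C_0}e^{-\frac{\gamma}{2}(t-s)}$, so the first term on the right decays like $e^{-\frac{\gamma}{2}(t-t_*)}$ and vanishes as $t\to\infty$.

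For the Duhamel term I split $\int_{t_*}^t=\int_{t_*}^{t_\delta}+\int_{t_\delta}^t$, where, given $\delta>0$, the hypothesis $\lim_{t\to\infty}\frac1T\int_t^{t+T}\beta^+=0$ lets me choose $t_\delta\ge t_*$ with $\int_\tau^{\tau+T}\beta^+\le\delta T$ for all $\tau\ge t_\delta$. The portion over $[t_*,t_\delta]$ is a fixed finite multiple of $e^{-\frac{\gamma}{2}(t-t_\delta)}$ and hence tends to $0$. For the portion over $[t_\delta,t]$ I tile the interval by subintervals of length $T$ receding from $t$; on the $k$-th such subinterval the kernel is at most $e^{C_0}e^{-\frac{\gamma}{2}kT}$ and the integral of $\beta^+$ is at most $\delta T$, so summing the geometric series gives $\int_{t_\delta}^t e^{-\frac{\gamma}{2}(t-s)}\beta^+(s)\,ds\le e^{C_0}\delta T/(1-e^{-\gamma T/2})$. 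Thus $\limsup_{t\to\infty}$ of the Duhamel term is bounded by a constant times $\delta$; since $\delta>0$ is arbitrary this term tends to $0$. Combining both estimates with $0\le\xi(t)$ forces $\xi(t)\to0$.

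I expect the main obstacle to be the convolution estimate for the Duhamel term: one cannot use pointwise smallness of $\beta^+$ (which need not hold), only that its running averages over windows of length $T$ vanish, so the bookkeeping that pairs the exponential decay of the kernel against these averages --- tiling $[t_\delta,t]$ by length-$T$ windows and summing a geometric series --- is exactly what makes the smallness of $\int\beta^+$ interact correctly with the decay rate coming from $\alpha$. A secondary technical point is converting the averaged lower bound on $\alpha$ into the genuine linear lower bound on $\int_s^t\alpha$, which is where the $\alpha^-$ hypothesis is needed to preclude large negative excursions between sampling points.
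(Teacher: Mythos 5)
Your proof is correct. The paper itself gives no proof of Lemma \ref{modGronwall} (it is quoted from \cite{Jones-Titi1992}, see also \cite{Foias2001}), and your argument is essentially the standard one from those references: variation of constants, the linear-in-time lower bound $\int_s^t\alpha\ge\frac{\gamma}{2}(t-s)-C_0$ extracted from the averaged hypotheses on $\alpha$ and $\alpha^-$, and the tiling of the Duhamel term by length-$T$ windows summed against the geometric decay of the kernel.
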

Lemma \ref{modGronwall} will be combined later with the following estimates for averaged solutions (see \cite{Jones-Titi1992,Jones-Titi1993}).
\begin{proposition}Let $u$ be a solution of the NSE \eqref{NSE}, and let $T=(\nu\k0^2)^{-1}$.   Then
\begin{align} \label{aveAu}
\limsup_{t\to \infty} \frac{1}{T} \int_t^{t+T} |Au(\tau)|^2 \ d\tau \le 2\nu^2\k0^4G^2 \;.
\end{align}
If $u\in\cA$, then
\begin{align}\label{limsupaveAu}
\limsup_{t_0\to -\infty}\frac{1}{t-t_0}\int_{t_0}^t |Au(\tau)|^2\ d\tau \le \nu^2\k0^4 G^2, \quad\text{ for all }\  t \in \bR
\end{align}
Moreover, it follows from the Cauchy-Schwarz inequality that
\begin{align}\label{limsupintAu}
\limsup_{t_0\to -\infty} \frac{1}{t-t_0}\int_{t_0}^t |Au(\tau)| \ d\tau \le \nu \k0^2 G, \quad \text{for all} \ t \in \bR \;.
\end{align}
\end{proposition}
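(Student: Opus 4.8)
The plan is to derive all three bounds from a single enstrophy differential inequality obtained by pairing the evolution equation \eqref{NSE} with $Au$. Using $(u_t,Au)=\tfrac12\frac{d}{dt}\|u\|^2$ (since $A$ is self-adjoint and $\|u\|^2=(u,Au)$) together with the orthogonality relation \eqref{ortho}, which annihilates the nonlinear term $(B(u,u),Au)=0$, I would be left with
\[
\tfrac12\frac{d}{dt}\|u\|^2 + \nu|Au|^2 = (f,Au).
\]
Estimating the right-hand side by Cauchy--Schwarz and Young's inequality \eqref{Young} with $p=q=2$, namely $(f,Au)\le|f||Au|\le\frac{\nu}{2}|Au|^2+\frac{1}{2\nu}|f|^2$, and absorbing the $|Au|^2$ term on the left, I arrive at the key inequality
\[
\frac{d}{dt}\|u\|^2 + \nu|Au|^2 \le \frac{1}{\nu}|f|^2 .
\]

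For \eqref{aveAu} I would integrate this over $[t,t+T]$, drop the nonnegative term $\|u(t+T)\|^2$, and divide by $\nu T$ to get
\[
\frac1T\int_t^{t+T}|Au(\tau)|^2\,d\tau \le \frac{\|u(t)\|^2}{\nu T} + \frac{|f|^2}{\nu^2}.
\]
With $T=(\nu\k0^2)^{-1}$ one has $1/(\nu T)=\k0^2$, and the definition of the Grashof number in \eqref{Grashof} gives $|f|^2/\nu^2=\nu^2\k0^4G^2$. The only analytic input needed is the asymptotic $V$-bound $\limsup_{t\to\infty}\|u(t)\|^2\le\nu^2\k0^2G^2$, which follows from \eqref{Grashof} and the fact that the global attractor attracts all bounded sets of $V$; taking $\limsup_{t\to\infty}$ then produces the sum $\nu^2\k0^4G^2+\nu^2\k0^4G^2=2\nu^2\k0^4G^2$.

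For \eqref{limsupaveAu} I would restrict to $u\in\cA$, so that $u(\cdot)$ is defined for all $t\in\bR$ and $\|u(t_0)\|\le G\nu\k0$ is uniformly bounded by \eqref{Grashof}. Integrating the same inequality over $[t_0,t]$ and dividing by $\nu(t-t_0)$ yields
\[
\frac{1}{t-t_0}\int_{t_0}^t|Au(\tau)|^2\,d\tau \le \frac{\|u(t_0)\|^2}{\nu^2(t-t_0)} + \frac{|f|^2}{\nu^2};
\]
here the boundary term is $O\big(1/(t-t_0)\big)$ and vanishes as $t_0\to-\infty$, leaving the single copy $|f|^2/\nu^2=\nu^2\k0^4G^2$, which is exactly why the attractor estimate improves on \eqref{aveAu} by a factor of two. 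Finally, \eqref{limsupintAu} is immediate from Cauchy--Schwarz in the form $\frac{1}{t-t_0}\int_{t_0}^t|Au|\,d\tau\le\big(\frac{1}{t-t_0}\int_{t_0}^t|Au|^2\,d\tau\big)^{1/2}$, followed by \eqref{limsupaveAu} and $\big(\nu^2\k0^4G^2\big)^{1/2}=\nu\k0^2G$.

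There is no serious obstacle: the computation is a standard enstrophy estimate. The one point deserving care is the asymptotic bound $\limsup_{t\to\infty}\|u(t)\|^2\le\nu^2\k0^2G^2$ used for \eqref{aveAu}, which must hold for an arbitrary solution rather than only for trajectories on $\cA$ and rests on the $V$-absorbing-ball property underlying \eqref{Grashof}; for the attractor estimate \eqref{limsupaveAu} this is replaced by the simpler uniform bound $\|u(t_0)\|\le G\nu\k0$ on $\cA$.
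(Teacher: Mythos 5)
Your argument is correct and is exactly the standard enstrophy-balance derivation from the references \cite{Jones-Titi1992,Jones-Titi1993} that the paper cites in lieu of a proof: multiply by $Au$, use \eqref{ortho} to kill the nonlinearity, absorb $(f,Au)$ by Young's inequality, and integrate, with the boundary term $\|u(t)\|^2$ accounting for the factor $2$ in \eqref{aveAu} and vanishing after division by $t-t_0$ in \eqref{limsupaveAu}. The only blemish is a harmless typo ($\nu^2(t-t_0)$ should be $\nu(t-t_0)$ in the denominator of the boundary term), which does not affect the limit.
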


\begin{proposition}\label{limsupAu}
Let $u(t)$ be a solution of the NSE \eqref{NSE}, then
\begin{align*}
\limsup_{t\to \infty} \|u(t)\| \le \nu\k0 G, \quad \text{and}\quad \limsup_{t\to \infty} |Au(t)| \le c\nu\k0^2G^3.
\end{align*}
In particular, we have
\begin{equation}\label{new anal}
\|u(t)\| \le \nu\k0 G \quad \text{and} \quad  |Au|\leq c_0\nu\k0^2G^3\;, \forall u \in \cA \;.
\end{equation}
Moreover, the solutions in the global attractor $\cA$ are analytic with respect to the time variable in a strip about the real axis with width $\delta_{\rm{Time}} \ge \frac{c}{\nu \k0^2 G^4}$.
In addition, by the Cauchy formula one obtains from the above estimates that
$$
\sup_{t \in \bR} \| \frac{du}{dt}(t)\| \le c \nu^2 \k0^3 G^5 \quad \text{and} \quad \sup_{t \in \bR} |A \frac{du}{dt}(t)| \le c \nu^2 \k0^4 G^7.
$$
\end{proposition}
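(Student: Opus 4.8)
The plan is to establish the four assertions in turn---the uniform bound on $\|u\|$, the bound on $|Au|$, the time-analyticity, and finally the derivative bounds via the Cauchy formula. The bounds on $\cA$ follow from the $\limsup$ statements together with the invariance of $\cA$ under the flow: since the complete bounded orbit through any point of $\cA$ can be translated arbitrarily in time, the asymptotic bound must already hold at that point.

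For the first bound I would take the $H$-inner product of \eqref{NSE} with $Au$. The term $(\frac{d}{dt}u,Au)=\frac12\frac{d}{dt}\|u\|^2$, the nonlinear term drops out by the orthogonality \eqref{ortho}, and $(f,Au)\le|f||Au|$. Splitting $\nu|Au|^2$ with the Poincar\'e inequality \eqref{fuit} (so that $\nu|Au|^2\ge\frac{\nu}{2}|Au|^2+\frac{\nu\k0^2}{2}\|u\|^2$) and using Young's inequality \eqref{Young} to absorb $|f||Au|$ into $\frac{\nu}{2}|Au|^2$, I arrive at $\frac{d}{dt}\|u\|^2+\nu\k0^2\|u\|^2\le |f|^2/\nu$. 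Gronwall's inequality then gives $\limsup_{t\to\infty}\|u\|^2\le |f|^2/(\nu^2\k0^2)=(\nu\k0 G)^2$, which is the first claim.

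For the bound on $|Au|$ the clean route, which also supplies the analyticity width, is the Gevrey-regularity method of Foias--Temam. Starting from the enstrophy bound just obtained, one estimates $u$ in a complex neighborhood of the real time axis (equivalently, controls the Gevrey norm $|A^{1/2}e^{\sigma A^{1/2}}u|$) and shows that $u$ extends analytically to a strip about the real axis of width $\delta_{\rm{Time}}\ge c/(\nu\k0^2G^4)$, with the Gevrey norm bounded on the strip by $c\nu\k0 G$ at radius $\sigma\sim(\nu\delta_{\rm{Time}})^{1/2}\sim 1/(\k0 G^2)$. The cubic bound then follows from the elementary spectral inequality $|Au|\le\frac{c}{\sigma}|A^{1/2}e^{\sigma A^{1/2}}u|$ (from $\sup_{\lambda>0}\lambda e^{-2\sigma\lambda^{1/2}}\le c/\sigma^2$), which yields $|Au|\le c\k0 G^2\cdot\nu\k0 G=c\nu\k0^2 G^3$. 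A purely real-variable alternative is to take the inner product of \eqref{NSE} with $A^2u$, rewrite the nonlinear term using \eqref{moveu} and \eqref{flip} as $(B(u,u),A^2u)=-(B(Au,u),Au)$, estimate $|(B(Au,u),Au)|\le\csubL^2\,\|u\|\,|Au|\,|A^{3/2}u|$ by Ladyzhenskaya \eqref{Ladyzhenskaya}, and feed the resulting differential inequality into the modified Gronwall Lemma \ref{modGronwall} together with the averaged bound \eqref{aveAu}; however, the coefficient produced this way is of order $\|u\|^2\sim\nu^2\k0^2G^2$, so a naive Gronwall step costs a factor exponential in $G^2$, and recovering the polynomial power genuinely requires the Gevrey/analyticity machinery.

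With analyticity in hand, the derivative bounds are immediate. For real $t$, Cauchy's formula on the circle $|z-t|=\rho$ with $\rho$ a fixed fraction of $\delta_{\rm{Time}}$ gives $\|\frac{du}{dt}(t)\|\le\rho^{-1}\sup_{|z-t|=\rho}\|u(z)\|$ and $|A\frac{du}{dt}(t)|\le\rho^{-1}\sup_{|z-t|=\rho}|Au(z)|$. Since the bounds $\|u\|\le\nu\k0 G$ and $|Au|\le c\nu\k0^2G^3$ persist on the strip (they come from the Gevrey-norm bound, which is uniform there) and $\rho^{-1}\sim\nu\k0^2G^4$, this produces $\sup_t\|\frac{du}{dt}\|\le c\nu^2\k0^3G^5$ and $\sup_t|A\frac{du}{dt}|\le c\nu^2\k0^4G^7$. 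The main obstacle is the analyticity estimate itself: obtaining an explicit, polynomial-in-$G$ lower bound for the width $\delta_{\rm{Time}}$ is where the special structure of the 2D nonlinearity (the cancellations in \eqref{ortho}--\eqref{moveu}, the 2D Sobolev/Ladyzhenskaya inequalities, and the Banach-algebra property of the Gevrey class) is used in an essential way, and it is what prevents the estimates from degrading to exponential dependence on $G$.
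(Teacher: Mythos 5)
You should first be aware that the paper does not actually prove this proposition: it states that the ideas are in \cite{CF88} and that the sharp estimates themselves are established in \cite{FJLRYZ}, so any comparison is against that cited route rather than an argument in the text. Your first step (multiplying by $Au$, using \eqref{ortho}, Poincar\'e, Young, and Gronwall to get $\limsup\|u\|\le\nu\k0 G$) is correct and standard, as is the passage from the $\limsup$ bounds to the pointwise bounds on $\cA$ via invariance, and your Cauchy-formula numerology at the end is consistent: with strip width $\rho\sim 1/(\nu\k0^2G^4)$ and $\|u\|\lesssim\nu\k0G$, $|Au|\lesssim\nu\k0^2G^3$ on the strip, one indeed gets $\nu^2\k0^3G^5$ and $\nu^2\k0^4G^7$.

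The genuine gap is in your route to $|Au|\le c\nu\k0^2G^3$. The paper assumes only $f\in V$, and with such forcing the solution is in $D(A^{3/2})$ at best; the spatial Gevrey norm $|A^{1/2}e^{\sigma A^{1/2}}u|$ that your key inequality $|Au|\le\frac{c}{\sigma}|A^{1/2}e^{\sigma A^{1/2}}u|$ relies on is in general infinite, since Gevrey regularity of solutions requires Gevrey-class forcing (Foias--Temam). You also write that controlling this spatial Gevrey norm is ``equivalent'' to estimating $u$ in a complex time neighborhood; it is not -- time analyticity is a separate property, obtained by complexifying $t$ and estimating $\|u(\zeta)\|$ on a strip, and it does hold for $f\in V$. (The width $c/(\nu\k0^2G^4)$ rather than $c/(\nu\k0^2G^2)$ arises precisely because the cancellation \eqref{ortho} is lost off the real axis, forcing an estimate like $|(B(u,u),Au)|\le c|u|^{1/2}\|u\||Au|^{3/2}$.) But time analyticity alone, fed through the equation $\nu Au=f-u'-B(u,u)$, yields only $|Au|\lesssim\nu\k0^2G^5$, not $G^3$; and you correctly observe that the real-variable palinstrophy estimate degrades to $e^{cG^2}$. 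So the crux of the proposition -- the cubic bound -- is identified but not actually established by your argument; this is exactly the piece the paper outsources to \cite{FJLRYZ}, and it requires a genuinely different mechanism than the one you propose.
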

 The ideas of the proof of the above proposition can be found in \cite{CF88}. However, the new sharp estimates stated  in Proposition \ref{limsupAu} are obtained in \cite{FJLRYZ}.

We now derive two bounds for $A^{1/2}B(u,v)$.
\begin{proposition} For all  $u \in D(A^{3/4})$, and  $v \in D(A)$ we have
\begin{equation}\label{1_2}
\left|A^{1/2}B(u,v)\right| \leq \ c\left(|A^{3/4}u||A^{3/4}v|+|u|^{1/2}|Au|^{1/2}|Av|\right),
\end{equation}
and for all $u \in V$ and $v \in D(A^{3/2})$
\begin{equation}\label{1_3}
\left|A^{1/2}B(u,v)\right| \leq c\left(|A^{1/2}u||A^{1/2}v|^{1/2}|A^{3/2}v|^{1/2}+|A^{1/4}u||A^{5/4}v|\right).
\end{equation}
\end{proposition}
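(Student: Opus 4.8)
The plan is to reduce both inequalities to a single $L^2$ estimate on $\nabla\big((u\cdot\nabla)v\big)$, and then to split the two available derivatives between $u$ and $v$ by suitable H\"older choices. First I would note that, since $u$ is divergence free, $w:=(u\cdot\nabla)v=\Div(u\otimes v)$ is mean free, and that on mean-free fields $\cP$ and $(-\Delta)^{1/2}$ are commuting Fourier multipliers with $\|\cP\|\le 1$ and $A^{1/2}\cP=\cP(-\Delta)^{1/2}$. Consequently, using $|(-\Delta)^{1/2}g|=|\nabla g|$ for any mean-free $g$,
\[
\left|A^{1/2}B(u,v)\right|=\left|\cP(-\Delta)^{1/2}w\right|\le\left|(-\Delta)^{1/2}w\right|=|\nabla w|.
\]
By the Leibniz rule, $\partial_\ell\big(u_k\partial_k v_j\big)=(\partial_\ell u_k)(\partial_k v_j)+u_k\,\partial_k\partial_\ell v_j$, so after the triangle inequality it remains to bound the two pieces $\|(\nabla u)(\nabla v)\|_{L^2}$ and $\|u\,\nabla^2 v\|_{L^2}$. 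Throughout I would use the identities $\|\nabla^k w\|_{L^2}=|A^{k/2}w|$ and $A\nabla=\nabla A$, valid for mean-free periodic fields, to convert derivative norms into powers of $A$.

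For \eqref{1_2} I would estimate the first piece by H\"older in $L^4\times L^4$ followed by the two-dimensional Sobolev embedding $\dot H^{1/2}\hookrightarrow L^4$, i.e.\ \eqref{Sobolev}: since $\|\nabla u\|_{L^4}\le c|A^{1/4}\nabla u|=c|A^{3/4}u|$ and likewise for $v$, one gets $\|(\nabla u)(\nabla v)\|_{L^2}\le c\,|A^{3/4}u|\,|A^{3/4}v|$. For the second piece I would use H\"older in $L^\infty\times L^2$ together with Agmon's inequality \eqref{Agmon}, giving $\|u\,\nabla^2 v\|_{L^2}\le\|u\|_\infty\|\nabla^2 v\|_{L^2}\le \csubA|u|^{1/2}|Au|^{1/2}|Av|$. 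Summing the two pieces yields \eqref{1_2}. It is worth noting that the Ladyzhenskaya bound \eqref{Ladyzhenskaya} applied to $\nabla u$ would only give $|A^{1/2}u|^{1/2}|Au|^{1/2}$, which is too large, so the embedding into $L^4$ is exactly what produces the sharp $|A^{3/4}u|\,|A^{3/4}v|$ term.

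For \eqref{1_3} the only change is that the derivatives are redistributed to load regularity onto $v$. For the first piece I would instead use H\"older in $L^2\times L^\infty$ and apply Agmon to $\nabla v$, obtaining $\|(\nabla u)(\nabla v)\|_{L^2}\le\|\nabla u\|_{L^2}\|\nabla v\|_\infty\le c\,|A^{1/2}u|\,|A^{1/2}v|^{1/2}|A^{3/2}v|^{1/2}$, where I use $|A\nabla v|=|A^{3/2}v|$. For the second piece I would again use H\"older in $L^4\times L^4$, but now placing $u$ and $\nabla^2 v$ in $L^4$ and applying \eqref{Sobolev} to both, $\|u\,\nabla^2 v\|_{L^2}\le\|u\|_{L^4}\|\nabla^2 v\|_{L^4}\le c\,|A^{1/4}u|\,|A^{1/4}\nabla^2 v|=c\,|A^{1/4}u|\,|A^{5/4}v|$. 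Summing gives \eqref{1_3}.

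The only genuinely delicate point is the opening reduction: one must justify that $\cP$ may be discarded at the cost of replacing $A^{1/2}$ by $(-\Delta)^{1/2}=|\nabla|$, and that every fractional derivative norm appearing after the H\"older steps coincides with the stated power of $A$. Both are routine in the periodic, mean-free setting through the Fourier representations of $A$, $\cP$ and $\nabla$, but this is precisely where the divergence-free structure and the exact choice of exponents have to be matched; everything after that is a mechanical application of H\"older together with \eqref{Agmon} and \eqref{Sobolev}.
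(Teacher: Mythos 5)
Your proposal is correct and follows essentially the same route as the paper: the reduction $\left|A^{1/2}B(u,v)\right|\le|\nabla\big((u\cdot\nabla)v\big)|$ (the paper's \eqref{1_1}), the Leibniz splitting into $(\nabla u)(\nabla v)$ and $u\,\nabla^2 v$, and then exactly the same pairings — $L^4\times L^4$ via \eqref{Sobolev} and $L^\infty\times L^2$ via \eqref{Agmon} for \eqref{1_2}, and $L^2\times L^\infty$ plus $L^4\times L^4$ for \eqref{1_3}. The only cosmetic difference is that you justify discarding $\cP$ through the Fourier-multiplier commutation, whereas the paper simply records the inequality $|A^{1/2}\cP\phi|\le|\nabla\phi|$.
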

\begin{proof}
First, we observe that
\begin{equation}\label{1_1}
\left|A^{1/2}\cP \phi \right|\leq |\nabla \phi|,\quad {\hbox {for every}}\,  \phi \in (\dot{H}^1_{\rm{per}}(\Omega))^2.
\end{equation}

By virtue of \eqref{Sobolev} we have
\begin{equation}\label{grad-nonl}
((\partial_i u\cdot \nabla) v,w) \le \|\partial_i u\|_{L^4}\|\nabla v\|_{L^4}|w| \le
\|\partial_i u\|_{H^{1/2}}\|\nabla v\|_{H^{1/2}}|w| \;.
\end{equation}
It follows, thanks to \eqref{1_1}, \eqref{grad-nonl},\eqref{Agmon} and \eqref{LadyzhenskayaTilde}, that
\begin{equation*}
\begin{aligned}
\left|A^{1/2}B(u,v)\right| &\leq \sum_{i=1,2}\left|\partial_i (u\cdot \nabla) v\right|=\sum_{i=1,2}\left|(\partial_i u\cdot \nabla) v\right|+
\sum_{i=1,2}\left| (u\cdot \nabla) \partial_i v\right|\\
&\leq c  \sum_{i=1,2}\left(\left\|\partial_i u\right\|_{H^{1/2}}\left\|\nabla v\right\|_{H^{1/2}}+\|u\|_{L^{\infty}} \left|\nabla\partial_i v\right|\right)\\
&\leq c\left(|A^{3/4}u||A^{3/4}v|+|u|^{1/2}|Au|^{1/2}|Av|\right)\;,
\end{aligned}
\end{equation*}
and
\begin{equation*}
\begin{aligned}
\left|A^{1/2}B(u,v)\right|&\leq \sum_{i=1,2}\left|\partial_i (u\cdot \nabla) v\right|=\sum_{i=1,2}\left|(\partial_i u\cdot \nabla) v\right|+
\sum_{i=1,2}\left| (u\cdot \nabla) \partial_i v\right|\\
&\leq c  \sum_{i=1,2}\left(\left|\partial_i u\right|\left\|\nabla v\right\|_{L^{\infty}}+\|u\|_{H^{1/2}} \left\|\nabla\partial_i v\right\|_{H^{1/2}}\right)\\
&\leq c\left(|A^{1/2}u||A^{1/2}v|^{1/2}|A^{3/2}v|^{1/2}+|A^{1/4}u||A^{5/4}v|\right)\;.
\end{aligned}
\end{equation*}
\end{proof}

Inspired by the proof of the Br\'ezis-Gallouet inequality \cite{Brezis-Gallouet} we establish below a  bound for the $L^\infty-$norm, which we will use later to optimize an estimate.

\begin{lemma}
Let $\phi\in \dot{H}_{\rm{per}}^2(\Omega),$ then for every $N\in\bR^+$ we have
\begin{equation}\label{optima}
\begin{aligned}
\left\|\phi\right\|_{L^\infty}\leq \calL_N |\nabla\phi|+
(\sqrt{\pi}\kappa_0N)^{-1}|\Delta\phi| \;,
\end{aligned}
\end{equation}
where
$\calL_N=(8+2\pi\log N)^{1/2}(2\pi)^{-1}$.
\end{lemma}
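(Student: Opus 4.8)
The plan is to expand $\phi$ in the eigenbasis of $-\Delta$ on the torus, reduce the $L^\infty$ bound to an $\ell^1$ norm of Fourier coefficients, and then split that sum at the frequency level $N$ and control each piece by Cauchy--Schwarz. Concretely, write $\phi=\sum_{k\in\bZ^2\setminus\{0\}}c_k\,e_k$ with $e_k(x)=L^{-1}e^{i\kappa_0 k\cdot x}$, an orthonormal basis of $\dot L^2_{\rm per}(\Omega)$; since $-\Delta e_k=\kappa_0^2|k|^2 e_k$ one has $|\nabla\phi|^2=\sum_k \kappa_0^2|k|^2|c_k|^2$ and $|\Delta\phi|^2=\sum_k \kappa_0^4|k|^4|c_k|^2$. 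Because $|e_k(x)|\equiv L^{-1}$, the triangle inequality yields the elementary but crucial bound $\|\phi\|_{L^\infty}\le L^{-1}\sum_{k\ne 0}|c_k|$.

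First I would split the sum at $|k|\le N$ and $|k|>N$. On the low part I would factor $|c_k|=(\kappa_0|k|)^{-1}\cdot(\kappa_0|k|\,|c_k|)$ and apply Cauchy--Schwarz, producing $|\nabla\phi|$ times $\kappa_0^{-1}\big(\sum_{0<|k|\le N}|k|^{-2}\big)^{1/2}$; on the high part I would factor $|c_k|=(\kappa_0^2|k|^2)^{-1}\cdot(\kappa_0^2|k|^2|c_k|)$ and apply Cauchy--Schwarz, producing $|\Delta\phi|$ times $\kappa_0^{-2}\big(\sum_{|k|>N}|k|^{-4}\big)^{1/2}$. Using $L^{-1}\kappa_0^{-1}=(2\pi)^{-1}$ and $L^{-1}\kappa_0^{-2}=L(4\pi^2)^{-1}$, the whole estimate collapses to the two lattice--sum bounds
\begin{equation*}
\sum_{0<|k|\le N}\frac1{|k|^2}\le 8+2\pi\log N,\qquad \sum_{|k|>N}\frac1{|k|^4}\le\frac{4\pi}{N^2},
\end{equation*}
after which the first term becomes exactly $\calL_N|\nabla\phi|$ and the second becomes exactly $(\sqrt\pi\,\kappa_0 N)^{-1}|\Delta\phi|$, which is \eqref{optima}.

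To prove the two lattice bounds I would compare the sums with the planar integrals $\int_{1\le|\xi|\le N}|\xi|^{-2}\,d\xi=2\pi\log N$ and $\int_{|\xi|>N}|\xi|^{-4}\,d\xi=\pi N^{-2}$, which account for the main terms $2\pi\log N$ and (with ample room) $4\pi N^{-2}$. The cleanest rigorous route is through the counting function $n(\rho)=\#\{k\ne 0:|k|\le\rho\}$, for which the Gauss circle estimate gives $n(\rho)\le\pi\rho^2+c\rho$; an Abel summation / integration-by-parts then delivers the logarithmic main term with the sharp coefficient $2\pi$, the remaining boundary and origin contributions being absorbed into the constant $8$.

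The main obstacle is exactly the sharpness of these constants. A na\"ive square-by-square comparison (each $k$ owning the unit cell $Q_k=k+[-\tfrac12,\tfrac12]^2$ and bounding $|k|^{-2}$ by a multiple of $\int_{Q_k}|\xi|^{-2}$) only produces a constant strictly larger than $1$ in front of $\log N$, which is not good enough; obtaining the coefficient exactly $2\pi$ forces the more careful counting-function argument. One must also handle the singularity of $|\xi|^{-2}$ near the origin (the four innermost modes $|k|=1$ already contribute $4$, the bulk of the constant $8$) and verify the inequalities for every $N\in\bR^+$, including the regime $N<1$ where the low sum is empty and every mode falls in the high part. Once these elementary but delicate estimates are secured, substituting $\kappa_0=2\pi/L$ and simplifying yields \eqref{optima}.
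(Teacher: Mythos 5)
Your reduction is exactly the paper's: both proofs expand $\phi$ in a Fourier series, bound $\|\phi\|_{L^\infty}$ by the $\ell^1$ norm of the coefficients, split at $|k|\le N$ versus $|k|> N$, and apply Cauchy--Schwarz to pair the low frequencies with $|\nabla\phi|$ and the high frequencies with $|\Delta\phi|$; the two lattice-sum bounds you isolate, $\sum_{0<|k|\le N}|k|^{-2}\le 8+2\pi\log N$ and $\sum_{|k|>N}|k|^{-4}\le 4\pi N^{-2}$, are precisely the two displayed inequalities that open the paper's proof. Where you diverge is in how those sums are estimated. The paper uses a direct comparison: it evaluates the shells $|k|^2=1,2$ exactly (contributing $6$), compares the remaining axis modes with $4\int_2^N x^{-2}\,dx\le 2$, and compares the off-axis modes with the planar integral $\int_1^N\int_0^{2\pi}r^{-1}\,d\theta\,dr=2\pi\log N$; the quartic sum is handled the same way (and only verified for $N\ge 3$, which is the regime in which the lemma is later applied). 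Your route through the counting function $n(\rho)\le\pi\rho^2+c\rho$ and Abel summation does capture the sharp leading coefficient $2\pi$, but as sketched it does not recover the constant $8$: after integrating by parts, the linear error term contributes about $2c\approx 2\sqrt{2}\,\pi>8$ on its own, and this contribution is concentrated near $\rho=1$, so it is not ``absorbed'' into $8$. To close the argument you must still peel off the innermost shells by hand (exactly as the paper does) and invoke the counting bound only from a radius $R_0$ large enough that the residual $2c/R_0$ plus the boundary term fits under the remaining budget. With that adjustment your proof goes through; without it you would obtain the lemma only with a larger constant in place of $8$, which would still serve the paper's later purposes but would have to be tracked through the definition of $\calL_N$ and the subsequent choice of $\mu$.
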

\begin{proof}
First note that
$$
\sum_{1\le |k|^2 \le N^2} \frac{1}{|k|^2} = 6+ \sum_{3\le |k|^2 \le N^2} \frac{1}{|k|^2}
\le 6 + 4\int_2^N \frac{dx}{x^2} + \int_1^N\int_0^{2\pi} \frac{1}{r^2} r  \ d\theta \ dr \le 8  + 2\pi \log N \;,
$$
and for $N\geq 3$
$$
\sum_{N+1\le |k| } \frac{1}{|k|^4} \le 4\frac{1}{(N+1)^4}+4\int_{N+1}^{\infty} \frac{dx}{x^4}+\int_{N-1}^\infty \int_0^{2\pi} \frac{1}{r^4} r  \ d\theta \ dr \leq \frac{4\pi}{N^2} \;.
$$
Using the Cauchy-Schwarz inequality and Parseval's identity, we have
\begin{equation*}
\begin{aligned}
&\|\phi\|_{L^\infty} \leq\sum_{1 \le  |k|\leq N} |\hat{\phi}_k |+\sum_{|k|\geq N+1}|\hat{\phi}_k |\\
&\le
\left[\sum_{1 \le |k|\leq N}\frac{1}{|k|^2}\right]^{1/2}\left[\sum_{1 \le  |k|\leq N}|k|^2|\hat{\phi}_k|^2\right]^{1/2}+
\left[\sum_{|k|\geq N+1}\frac{1}{|k|^4}\right]^{1/2}\left[\sum_{|k|\geq N+1}|k|^4|\hat{\phi}_k|^2\right]^{1/2}\\
&\le (8 + 2\pi \log N)^{1/2}\frac{|\nabla\phi|}{2\pi}+\frac{2\sqrt{\pi}}{N}\frac{|\Delta\phi|}{2\pi\k0}\;.
\end{aligned}
\end{equation*}

\end{proof}
\section{Determining form and statements of main results\label{Statement-Main-Results}}

\subsection{Interpolant operators} In this subsection we introduce  a unified approach for using  the various determining parameters (modes, nodes, volume elements, etc.) by representing them through interpolant operators that approximate identity.

Let  $J=J_h: (\dot{H}^2_{\rm {per}}(\Omega))^2 \to (\dot{C}^{\infty}_{\rm {per}}(\Omega))^2$ be a finite rank linear operator approximating the identity in the following sense: for every $\phi\in (\dot{H}^2_{per}(\Omega))^2$ we have  $J\phi\in (C^{\infty}_{\rm {per}}(\Omega))^2$ and $J\phi$ has zero spatial average;  in addition, we assume the following hold
\begin{equation}\label{J}
|J\phi-\phi|\leq c_1h|\nabla \phi|+c_2 h^2 |\Delta \phi|,
\end{equation}
\begin{equation}\label{double_J}
\left|\nabla(J\phi-\phi)\right|\leq \tilde{c}_1|\nabla \phi|+\tilde{c}_2 h |\Delta \phi|.
\end{equation}
 Here $h$ is a small parameter that determines the order of approximation.  The rank of $J_h$ is of the order $L/h \ge 1$.  For example, such intepolant polynomials are induced by the determining parameters of the NSE, e.g., determining modes, nodes, volume elements, finite elements projections, etc. (see, e.g., \cite{Cockburn-Jones-Titi,FP,FT2,FT1,Jones-Titi1992,Jones-Titi1993} and references therein). The most straight forward example of such interpolant operators  is the projection operator, $J_h=P_m$, onto the ${\rm{span}}\{w_1,w_2, \cdots w_m\}$, where $h= \lambda_{m+1}^{-1}$. Also, the appendix of \cite{Azouani-Olson-Titi} provides  explicit examples of such interpolant operators that are based on nodal values and are satisfying   \eqref{J} and \eqref{double_J}. We observe that a general framework employing such interpolant polynomials, satisfying \eqref{J}, for investigating the long time dynamics of the NSE was introduced in \cite{Cockburn-Jones-Titi}.

\subsection{Determining form} In this subsection we present a determining form that is induced by the interpolant operators $J_h$. The connection between the long time dynamics of the NSE \eqref{NSE} and the determining form  is achieved through the following:

\begin{proposition}\label{equality}
Let $G \ge 1$, and $u(s)$, for $s\in \bR$, be a solution of the NSE \eqref{NSE} that lies in the global attractor $\cA$. Suppose $w \in C_b(\bR; V)\cap L^2_{\rm{loc}}(\bR; D(A))$ with $\frac{dw}{ds} \in L^2_{\rm{loc}}(\bR;H)$  satisfies the following equation:
\begin{equation}\label{w-Att}
\frac{dw}{ds}+\nu Aw+B(w,w)=f-\mu\nu\kappa_0^2 \cP J(w-u).
\end{equation}
Then we have $w=u$, provided
\begin{align}\label{mucond2}
\mu> 6 c_T G\log (c_3 G) \quad \text{where} \quad c_3=(2c_Tc_0)^{1/3},
\end{align}
and $h$ is small enough to satisfy
\begin{align}\label{mucond1}
2\mu\k0^2c_J h^2 \le 1\;, \quad \text{where} \quad c_J=c_1+\frac{c_2^2}{2}\;,
\end{align}
and $c_T$, $c_0$, $c_1$, and $c_2$ are as in \eqref{Titi_1}, \eqref{new anal}, 
\eqref{J}, and \eqref{double_J}, respectively.
\end{proposition}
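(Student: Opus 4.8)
The plan is to set $\delta = w-u$ and show $\delta \equiv 0$ by deriving an energy inequality for $|\delta|^2$ and running a backward-in-time Gronwall argument. Subtracting \eqref{NSE} (satisfied by $u$) from \eqref{w-Att}, and using the bilinearity identity $B(w,w)-B(u,u)=B(w,\delta)+B(\delta,u)$, gives
\[
\frac{d\delta}{ds}+\nu A\delta + B(w,\delta)+B(\delta,u) = -\mu\nu\kappa_0^2\,\cP J\delta .
\]
Taking the $H$ inner product with $\delta$, using the orthogonality relation \eqref{flip} (which kills $(B(w,\delta),\delta)=0$), and writing $(\cP J\delta,\delta)=|\delta|^2-(\delta-J\delta,\delta)$, I obtain
\[
\tfrac12\tfrac{d}{ds}|\delta|^2 + \nu\|\delta\|^2 + \mu\nu\kappa_0^2|\delta|^2 = -(B(\delta,u),\delta) + \mu\nu\kappa_0^2(\delta-J\delta,\delta).
\]
The feedback thus supplies the coercive term $\mu\nu\kappa_0^2|\delta|^2$, and the whole game is to show that the two terms on the right are dominated by the left-hand side once $\mu$ is large and $h$ is small.

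The interpolation error is the more mechanical part: by \eqref{J} one has $|\delta-J\delta|\le c_1h\|\delta\|+c_2h^2|A\delta|$, so after Young's inequality the term $\mu\nu\kappa_0^2(\delta-J\delta,\delta)$ is subordinated to a fraction of the viscous dissipation $\nu\|\delta\|^2$ and of the feedback coercivity $\mu\nu\kappa_0^2|\delta|^2$; the smallness condition \eqref{mucond1}, with the constant $c_J=c_1+c_2^2/2$, is precisely what makes this absorption possible. The delicate part is the nonlinear term $(B(\delta,u),\delta)$, and here the target threshold $\mu\gtrsim G\log G$ — linear rather than quadratic in $G$ — dictates the method. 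A crude application of \eqref{Titi_1} followed by Young's inequality to absorb $\|\delta\|$ produces a coefficient proportional to $\|u\|^2\log(\cdot)$, whose size (even after time averaging) is $O(G^2)$; this is too lossy. Instead I would keep the estimate linear in a single norm of $u$ by writing $|(B(\delta,u),\delta)|\le\|\nabla u\|_{L^\infty}|\delta|^2$, in which no factor $\|\delta\|$ appears, and then bounding $\|\nabla u\|_{L^\infty}$ through the logarithmic inequality \eqref{optima}, optimized in $N$, which yields a bound proportional to $|Au|$ times a logarithmic factor. This is exactly the role of the sharpened $L^\infty$ lemma proved above, and I expect this sharp nonlinear estimate to be the main obstacle.

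Combining the two estimates yields a differential inequality of the form
\[
\tfrac{d}{ds}|\delta|^2 + \alpha(s)\,|\delta|^2 \le 0, \qquad \alpha(s)=2\mu\nu\kappa_0^2 - c\,\|\nabla u(s)\|_{L^\infty},
\]
valid for all $s\in\bR$. Since $u\in\cA$, both $u$ and $w$ are bounded in $V$ uniformly in $s$, so $|\delta(s)|$ is uniformly bounded; moreover the backward time-average of $|Au|$ is controlled by \eqref{limsupintAu}, namely $\limsup_{s_0\to-\infty}\frac{1}{s-s_0}\int_{s_0}^s|Au(\tau)|\,d\tau\le\nu\kappa_0^2G$. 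Feeding the bound for $\|\nabla u\|_{L^\infty}$ into this average and invoking the lower bound \eqref{mucond2} on $\mu$ (whose constant $c_3=(2c_Tc_0)^{1/3}$ encodes the attractor estimate $|Au|\le c_0\nu\kappa_0^2G^3$ from \eqref{new anal}) shows that $\frac{1}{s-s_0}\int_{s_0}^s\alpha(\tau)\,d\tau$ stays bounded below by a positive constant as $s_0\to-\infty$. Integrating the differential inequality gives $|\delta(s)|^2\le|\delta(s_0)|^2\exp\!\big(-\int_{s_0}^s\alpha\big)$; letting $s_0\to-\infty$ the exponential tends to $0$ while $|\delta(s_0)|^2$ stays bounded, forcing $\delta(s)=0$ for every $s$, i.e. $w=u$. (Equivalently, the last step may be phrased through the modified Gronwall Lemma \ref{modGronwall}.) The hypotheses are thus spent exactly where expected: \eqref{mucond1} tames the interpolation error, while \eqref{mucond2} together with \eqref{optima} and the averaged enstrophy bound \eqref{limsupintAu} secures the positivity of the averaged damping coefficient.
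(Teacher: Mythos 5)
Your overall strategy (contract the difference $\delta=w-u$ via a backward-in-time Gronwall argument driven by the time-averaged enstrophy bound \eqref{limsupintAu}) is the right one, but carrying out the energy estimate at the level of $|\delta|^2$ rather than $\|\delta\|^2$ creates a gap you cannot close. The general interpolant bound \eqref{J} contains the second-order term $c_2h^2|\Delta\phi|$, so the feedback error contributes $\mu\nu\kappa_0^2c_2h^2|A\delta|\,|\delta|$ to your inequality. In the $H$-level estimate the only dissipation available is $\nu\|\delta\|^2$ (plus the coercive $\mu\nu\kappa_0^2|\delta|^2$), and neither controls $|A\delta|$: there is no pointwise-in-time bound of $|A\delta|$ by $\|\delta\|$ or $|\delta|$, so Young's inequality cannot subordinate this term to the left-hand side, and \eqref{mucond1} does not help at this level. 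This is precisely why the paper tests the difference equation with $A\delta$: the resulting $\nu|A\delta|^2$ dissipation absorbs the $h^2|A\delta|$ error under \eqref{mucond1}. Your scheme would work for interpolants satisfying only the first-order bound $|J\phi-\phi|\le c_1h|\nabla\phi|$, but not for the general class \eqref{J}, which is the point of the proposition.

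There are two further mismatches. First, bounding $|(B(\delta,u),\delta)|\le\|\nabla u\|_{L^\infty}|\delta|^2$ and invoking \eqref{optima} for $\nabla u$ requires a quantitative bound on $|A^{3/2}u|$ over the attractor in order to choose $N$; no such bound is stated in the paper (only \eqref{new anal}), and whatever threshold on $\mu$ you extract this way will carry different constants from \eqref{mucond2}, whereas the proposition must be proved under exactly that condition --- the constants $6c_T$ and $c_3=(2c_Tc_0)^{1/3}$ are generated by the paper's specific route. That route works in $V$: after testing with $A\delta$, the identities \eqref{ortho} and \eqref{moveu} collapse all the nonlinear contributions into the single term $(B(\delta,\delta),Au)$, which is estimated by \eqref{Titi_1} as $c_T\|\delta\|^2(1+\log\theta)|Au|$ with $\theta=|A\delta|^2/(\kappa_0^2\|\delta\|^2)\ge1$; the retained dissipation contributes $+\theta$ inside the bracket, and the elementary minimization $\min_{\theta\ge1}\left[\theta-\alpha(1+\log\theta)\right]\ge-\alpha\log\alpha$ with $\alpha=2c_T|Au|/(\nu\kappa_0^2)$, combined with \eqref{new anal} and \eqref{limsupintAu}, yields exactly \eqref{mucond2}. (The paper also restricts to a maximal interval on which $\delta\ne0$ before dividing by $\|\delta\|^2$; you avoid this only because you never form such a quotient.) To salvage your argument, redo the estimate in $V$ and replace the $\|\nabla u\|_{L^\infty}$ step by this $(B(\delta,\delta),Au)$ device.
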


The proof of Proposition \ref{equality} is given in section \ref{Prop-equality}. We observe that the existence of solutions to \eqref{w-Att}, as specified in Theorem \ref{equality}, follows from Theorem \ref{wEU}.

Next, we introduce the phase space of the dynamics of our determining form. Let $$
X = C_b^1(\mathbb{R},J(\dot{H}^2_{\rm{per}}(\Omega))^2),
$$
with the norms
 \begin{equation*}
 \begin{aligned}
& \|v\|_X=\sup_{s\in \bR} \|v(s)\|/(\nu\kappa_0)+\sup_{s\in \bR} \|v'(s)\|/(\nu^2\kappa_0^3)\\
& \|v\|_{X,0}=\sup_{s \in \bR} \|v(s)\|/(\nu\kappa_0).
\end{aligned}
 \end{equation*}
Now, let  $v$ be a given element of $X$. We consider the equation
\begin{equation}\label{www}
\frac{dw}{ds}+\nu Aw+B(w,w)=f-\mu\nu\kappa_0^2\cP (Jw-v).
\end{equation}
We will show  that under certain conditions on the parameters $\mu$ and $h$, which depend on $\|v\|_X$,  \eqref{www} has a unique, bounded, global, for all $s\in \mathbb{R}$, solution $w(s)$ as is specified in the following:

\begin{theorem}\label{wEU}
Let $f\in V$, and let $\mathcal{B}_X^\rho(0)=\{v \in X:\, \|v\|_X \le \rho\}$ for some $\rho > 0$.  Fix $K$ and $\mu$ so that
$$
K \ge \big (2\rho^2+ \frac{G^2}{\mu} +1 \big)^{1/2}
$$
and
\begin{align}\label{finalmucond}
c_4 K^2\log \left(c_5 K^2\right)< \mu < 2c_4 K^2\log \left(c_5 K^2\right),
\end{align}
where $c_4=80(c_T+c_B+1)^2$, and $c_5=\sqrt{8}(c_T+c_B+1)$.
Choose $h$ small enough so that
 \begin{align}\label{muhcond}
 2\mu h^2\kappa_0^2\left(c_1^2+c_2\right)<\frac{1}{2}.
 \end{align}
Then for every $v\in \mathcal{B}_X^\rho(0)$ equation \eqref{www} has a unique solution, $w(s)$, that exists globally, for all $s\in \bR$,  and  satisfies the following properties:
\begin{align*}
&\text{(i) } \sup_{s\in \bR} \|w(s)\| \le \nu^2 \k0^2 K^2,\qquad
&\text{(ii) } \sup_{s\in \bR} \|w'(s)\| \le   \nu^2 \k0^3 C(K),\\
&\text{(iii) } \sup_{s\in \bR} |Aw(s)| \le  \nu \k0^2 C(K),\qquad
&\text{(iv) }  \sup_{s\in \bR} |A^{3/2}w(s)| \le \frac{\|f\|}{\nu} +  \nu \k0^3 C(K),\\
& \text{(v) } \sup_{s\in \bR} |Aw'(s)| \le   \k0 C(K)(\|f\|+ \nu^2 \k0^3).
 \end{align*}

 Moreover, suppose that $v_1,v_2 \in \mathcal{B}_X^\rho(0)$ and $w_1, w_2$ are the corresponding solutions of \eqref{www}. Denote by $\gamma=v_1-v_2$ and $\delta= w_1-w_2$.  Then
 \begin{align*}
 &\text{(v) } \sup_{s\in \bR} \|\delta(s)\| \le  4 \nu \k0 \|\gamma\|_X,
 \qquad &\text{(vi) } \sup_{s\in \bR} \|\delta'(s)\| \le   \nu^2 \k0^3 C(K) \|\gamma\|_X, \\
 &\text{(vii) } \sup_{s\in \bR} |A\delta(s)| \le   \nu \k0^2 C(K) \|\gamma\|_X,
 \end{align*}
 where $C(K)= c\exp(cK^2 \log K)$ for some universal constant $c>0$.
\end{theorem}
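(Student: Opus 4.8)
The plan is to treat \eqref{www} as a forced dissipative evolution equation in which the feedback term $-\mu\nu\kappa_0^2\cP(Jw-v)$ supplies the damping that tames the advection, and to proceed in the standard four stages: run a Galerkin scheme, derive $s$-independent a priori bounds, pass to the limit to get a global solution, and then bootstrap to higher regularity and to Lipschitz dependence on $v$. All the energy estimates below are performed on the truncated system $P_m$\eqref{www}, are uniform in $m$, and hence survive passage to the limit; the uniqueness proved at the end identifies the limit unambiguously.

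First I would establish the $V$-bound (i). Taking the $H$-inner product of \eqref{www} with $Aw$ and using the orthogonality $(B(w,w),Aw)=0$ from \eqref{ortho} gives, after writing $(Jw,Aw)=\|w\|^2+(Jw-w,Aw)$ and $(v,Aw)=((v,w))$,
\[
\tfrac12\tfrac{d}{ds}\|w\|^2+\nu|Aw|^2+\mu\nu\kappa_0^2\|w\|^2
=(f,Aw)-\mu\nu\kappa_0^2(Jw-w,Aw)+\mu\nu\kappa_0^2((v,w)).
\]
The term $(Jw-w,Aw)$ is controlled by \eqref{J} (with $|\nabla w|=\|w\|$, $|\Delta w|=|Aw|$), and the smallness condition \eqref{muhcond} on $h$ is exactly what absorbs the resulting $c_1h\|w\|\,|Aw|$ and $c_2h^2|Aw|^2$ contributions into $\nu|Aw|^2$ and into half of the damping $\mu\nu\kappa_0^2\|w\|^2$. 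Bounding $(f,Aw)$ and $\mu\nu\kappa_0^2((v,w))$ by Young's inequality \eqref{Young} (using $|f|=\nu^2\kappa_0^2 G$ and $\sup_s\|v(s)\|\le\nu\kappa_0\rho$), a Gronwall argument powered by the damping yields
\[
\sup_{s\in\bR}\|w(s)\|^2\le c\,\nu^2\kappa_0^2\Big(\frac{G^2}{\mu}+\rho^2\Big)\le c\,\nu^2\kappa_0^2K^2 ,
\]
which is bound (i); note that the factor $\mu$ cancels against the damping so that no power of $\mu$ survives in front of $\rho^2$, and that $K\ge(2\rho^2+G^2/\mu+1)^{1/2}$ is precisely what makes $G^2/\mu+\rho^2\le K^2$. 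This uniform bound rules out finite-time blow-up and gives a global forward solution from any datum; a complete trajectory on all of $\bR$ is then obtained by solving from initial times $s_0\to-\infty$ inside the absorbing ball and extracting a limit, using the higher-order bounds below together with Aubin--Lions compactness.

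The heart of the matter is the damping-versus-advection balance, which governs both uniqueness and the Lipschitz estimates. For two solutions $w_1,w_2$ with data $v_1,v_2$, set $\delta=w_1-w_2$ and $\gamma=v_1-v_2$; then
\[
\delta'+\nu A\delta+B(w_1,\delta)+B(\delta,w_2)
=-\mu\nu\kappa_0^2\cP J\delta+\mu\nu\kappa_0^2\cP\gamma .
\]
Pairing with $A\delta$, the feedback again produces $-\mu\nu\kappa_0^2\|\delta\|^2$ (modulo an $h$-term absorbed via \eqref{muhcond}), while the two bilinear terms are estimated by the sharp logarithmic inequalities \eqref{Titi_1} and \eqref{Brezis} in the form $c(c_T+c_B)\|\delta\|\,|A\delta|\,\|w_i\|\,(\log(\cdots))^{1/2}$. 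After Young's inequality the destabilizing coefficient of $\|\delta\|^2$ is of order $(c_T+c_B+1)^2\nu\kappa_0^2K^2\log(\cdots)$, and the lower bound $\mu>c_4K^2\log(c_5K^2)$ in \eqref{finalmucond}, with $c_4,c_5$ chosen to dominate exactly these constants, keeps the net coefficient $\gtrsim\mu\nu\kappa_0^2>0$. Setting $\gamma=0$ then forces $\delta\equiv0$ (uniqueness); keeping $\gamma$ and bounding $\mu\nu\kappa_0^2((\gamma,\delta))$ as before yields $\sup_s\|\delta\|\le 4\nu\kappa_0\|\gamma\|_X$, the first difference bound. The \emph{main obstacle} is precisely this constant bookkeeping: one must verify that the logarithmic estimates, together with the prescribed $c_4,c_5$, close the inequality \emph{uniformly in} $K$, and that the companion upper bound $\mu<2c_4K^2\log(c_5K^2)$ keeps every accumulated constant of the advertised size.

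Finally, the remaining bounds are obtained by bootstrapping. Rewriting \eqref{www} as
\[
\nu Aw=f-w'-B(w,w)+\mu\nu\kappa_0^2\cP(v-Jw),
\]
estimating $w'$ directly gives (ii), and applying $A^{1/2}$ together with the bilinear bounds \eqref{1_2}--\eqref{1_3} promotes the $V$-bound to the $|Aw|$- and $|A^{3/2}w|$-bounds (iii)--(iv), the $\|f\|/\nu$ in (iv) reflecting $f\in V$; differentiating in $s$ then yields the $|Aw'|$-bound. The Lipschitz versions for $\delta$ follow from the same manipulations applied to the $\delta$-equation. Each step multiplies the previous constant by a factor that is at worst $\exp(cK^2\log K)$, and because the two-sided control of $\mu$ keeps $\mu=O(K^2\log K^2)$, the accumulation stays of the stated form $C(K)=c\exp(cK^2\log K)$. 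These final estimates are routine given the inequalities recorded earlier, so I would present them only in outline, stressing only how $C(K)$ builds up multiplicatively yet remains controlled by the upper bound on $\mu$.
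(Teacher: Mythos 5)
Your overall skeleton (Galerkin, uniform a priori bounds, Aubin--Lions, then uniqueness/Lipschitz via the logarithmic inequalities \eqref{Titi_1}--\eqref{Brezis} and the lower bound on $\mu$) matches the paper, and your treatment of (i) and of $\sup_s\|\delta\|$ is essentially the paper's. But there is a genuine gap at the step you dismiss as ``routine'': the bound (ii) on $\sup_s\|w'(s)\|$. You propose to read $w'$ off from the equation $\nu Aw=f-w'-B(w,w)+\mu\nu\kappa_0^2\cP(v-Jw)$, but bounding $\|w'\|=|A^{1/2}w'|$ this way requires a pointwise-in-$s$ bound on $|A^{3/2}w|$ (and on $|A^{1/2}B(w,w)|$, which via \eqref{1_2} needs $|Aw|$), while every natural route to pointwise control of $|Aw|$ and $|A^{3/2}w|$ --- including the one the paper uses, namely $\tfrac{\nu}{2}|Aw|^2\le 2\|w'\|\,\|w\|+\mu\nu^3\kappa_0^4K^2$ --- already presupposes the bound on $\|w'\|$. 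As written, your bootstrap is circular: at the stage where you invoke it, the $V$-estimate only gives $|Aw|$ in \emph{time average}, not pointwise.

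The paper breaks this circularity by a mechanism you do not mention and which is the technical core of the theorem: differentiate \eqref{www} in $s$, pair with $Aw'$, and use the identity \eqref{moveu} so that the nonlinear contribution collapses to $(B(w',w'),Aw)$; estimate this with the optimized Br\'ezis--Gallouet-type inequality \eqref{optima}, whose free parameter is tuned to $N^2=K(1+\mu)^{1/2}$. This produces a differential inequality $\frac{d}{ds}\|w'\|^2+\alpha\|w'\|^2\le\beta$ whose damping coefficient $\alpha$ involves $|Aw|$ and $|Aw|^2$, controlled only on average over windows of length $(\nu\kappa_0^2)^{-1}$. One then needs the special Gronwall-type Lemma \ref{32} for solutions bounded on all of $\bR$, and here the \emph{two-sided} condition \eqref{finalmucond} plays two distinct roles: the lower bound (cf.\ \eqref{cond32}) forces $\int_{s-1/\nu\kappa_0^2}^{s}\alpha\ge 1$, while the upper bound (cf.\ \eqref{mu_bound3}) is what caps $\sup_{0\le r\le 1}\exp\bigl(-\int_{s-r/\nu\kappa_0^2}^{s}\alpha\bigr)$ by $\exp(cK^2\log K)$ --- this is the actual origin of the constant $C(K)$, not a generic ``multiplicative accumulation.'' Without this step (or a genuinely independent pointwise estimate on $|A^{3/2}w|$, which you do not supply), bounds (ii)--(v) and the corresponding Lipschitz estimates for $\delta'$ and $A\delta$ do not follow.
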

The proof of Theorem \ref{wEU} will be presented in section \ref{MainProof}.


The following corollary is an immediate consequence of Theorem \ref{wEU}.

\begin{corollary}\label{W-map} Assume  the conditions of Theorem \ref{wEU} hold. Then there exists a Lipschitz continuous map
\[
W:\mathcal{B}_X^\rho(0) \rightarrow C_b^1(\bR;D(A)),
\]
satisfying the following properties: \\
\quad (i) For every $v\in \mathcal{B}_X^\rho(0)$,  $W(v)(s) =w(s)$, for all $s\in \bR$, where $w(s)$ is the unique solution of \eqref{wEU} that corresponds to the input $v(s)$. \\
(ii) For every $v_1,v_2\in \mathcal{B}_X^\rho(0)$
\begin{align*}
\frac{1}{\k0} \sup_{s \in \bR} |A(W(v_1)(s)&-W(v_2)(s))| + \sup_{s \in \bR} \|W(v_1)(s)-W(v_2)(s)\| + \\
 & \frac{1}{\nu \k0^2} \sup_{s \in \bR} \|\frac{d}{ds}(W(v_1)(s)-W(v_2)(s))\| \le \nu \k0 C(K) \|v_1-v_2\|_X.
\end{align*}
\end{corollary}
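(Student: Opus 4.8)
The plan is to read the corollary off directly from Theorem~\ref{wEU}, since all the analytic content is already contained there; what remains is to repackage its conclusions as statements about a single map. First I would \emph{define} $W$ by setting $W(v) := w$, where $w$ is the solution of \eqref{www} associated to the input $v$. This is well posed precisely because Theorem~\ref{wEU} guarantees, for each $v \in \mathcal{B}_X^\rho(0)$, that such a solution exists and is unique; it is the uniqueness that makes $v \mapsto W(v)$ single-valued. Property (i) of the corollary then holds by this very definition.

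Next I would check that the image of $W$ lands in $C_b^1(\bR; D(A))$. Boundedness in the $D(A)$-topology is immediate from the uniform estimates of Theorem~\ref{wEU}: the bound on $|Aw|$ gives $\sup_s |Aw(s)| \le \nu\k0^2 C(K)$, so $w(s) \in D(A)$ uniformly in $s$, and the bound on $|Aw'|$ gives $\sup_s |Aw'(s)| \le \k0 C(K)(\|f\| + \nu^2\k0^3)$, so $w'(s) \in D(A)$ uniformly as well. The one point requiring a line of argument beyond mere boundedness is \emph{continuity} of $s \mapsto w(s)$ and $s \mapsto w'(s)$ in the $D(A)$-norm rather than a weaker one; this I would obtain either from the variational/mild formulation of \eqref{www} together with the regularity just recorded, or by invoking the continuity already produced in the construction underlying Theorem~\ref{wEU}.

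Finally, the Lipschitz estimate (ii) follows by adding the three difference estimates of Theorem~\ref{wEU} for $\delta = W(v_1)-W(v_2)$ and $\gamma = v_1-v_2$, each multiplied by the normalizing prefactor appearing in (ii). Concretely, the $|A\delta|$ estimate gives $\frac{1}{\k0}\sup_s |A\delta| \le \nu\k0 C(K)\|\gamma\|_X$, the $\|\delta\|$ estimate gives $\sup_s \|\delta\| \le 4\nu\k0\|\gamma\|_X$, and the $\|\delta'\|$ estimate gives $\frac{1}{\nu\k0^2}\sup_s \|\delta'\| \le \nu\k0 C(K)\|\gamma\|_X$. Summing, the left side of (ii) is bounded by $\nu\k0(2C(K)+4)\|\gamma\|_X$, and the numerical factors are absorbed into $C(K)=c\exp(cK^2\log K)$ by enlarging the universal constant $c$. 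Lipschitz continuity of $W$ into $C_b^1(\bR; D(A))$, with respect to the norm fixed implicitly by the left side of (ii), is then exactly this estimate.

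I expect no genuine analytic obstacle: the corollary is a bookkeeping consequence of Theorem~\ref{wEU}, and everything above is direct substitution followed by absorption of constants. The only place demanding actual argument, rather than quotation, is the upgrade from uniform $D(A)$-boundedness to $D(A)$-\emph{valued continuity} of $w$ and $w'$, which is what is needed to place $W(v)$ in $C_b^1(\bR; D(A))$.
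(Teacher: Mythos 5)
Your proposal is correct and takes essentially the same route as the paper, which offers no separate argument at all and simply declares the corollary ``an immediate consequence of Theorem \ref{wEU}'': well-definedness of $W$ from existence/uniqueness, membership in $C_b^1(\bR;D(A))$ from the uniform bounds on $|Aw|$ and $|Aw'|$, and estimate (ii) by summing the three difference bounds and absorbing constants into $C(K)$. Your added remark that one must upgrade uniform $D(A)$-boundedness to $D(A)$-valued continuity of $w$ and $w'$ is a point the paper leaves implicit in the construction of Theorem \ref{wEU}, and is a reasonable thing to flag.
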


This map, $W(v)$,  plays a crucial role in the definition of our determining form.
To be more specific, let $u^*$ be a  steady state of the NSE \eqref{NSE}.  Our determining form is the  equation
\begin{equation}\label{_v}
\frac{dv}{dt}=F(v)=-\left\|v-JW(v)\right\|_{X,0}^2(v-Ju^*)\;.
\end{equation}
The precise properties of \eqref{_v} are stated in Theorem \ref{MainThm}, below. But first we need the following

\begin{proposition}\label{J-Att} Suppose  that $G \ge 1$. Then for every $u\in \cA$ we have:
\[
\|J(u)\| \le c^* \nu \k0 G^3 \quad \text{and} \quad \|J(u')\| \le c^* \nu^2 \k0^3 G^7.
\]
Consequently,
\[
\|J(u)\|_X \le c^*G^7=:R.
\]
\end{proposition}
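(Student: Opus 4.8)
The plan is to reduce everything to gradient estimates and then insert the a priori attractor bounds. Since for any $\phi\in (\dot{H}^1_{\rm per}(\Omega))^2$ one has $\|\phi\|=|\nabla\phi|$, and for $\psi\in D(A)$ one has $|A\psi|=|\Delta\psi|$, I would first write $\nabla J(u)=\nabla(Ju-u)+\nabla u$ and apply \eqref{double_J} to the first term. This gives immediately
\[
\|J(u)\|=|\nabla J(u)|\le (1+\tilde{c}_1)\,\|u\|+\tilde{c}_2\,h\,|Au|,
\]
and, by exactly the same manipulation applied to the time derivative $u'=du/ds$ (which is legitimate because solutions lying on $\cA$ are smooth in space and analytic in time, so $Ju'$ is defined and, $J$ being linear and time independent, $(Ju)'=Ju'$),
\[
\|J(u')\|\le (1+\tilde{c}_1)\,\|u'\|+\tilde{c}_2\,h\,|Au'|.
\]

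Next I would feed in the attractor bounds. For the first inequality I use \eqref{new anal}, namely $\|u\|\le \nu\k0 G$ and $|Au|\le c_0\nu\k0^2 G^3$ for $u\in\cA$. For the $h$-factor I use that the rank condition $L/h\ge 1$ forces $h\k0\le \k0 L=2\pi$, so that $\tilde{c}_2 h|Au|\le 2\pi\tilde{c}_2 c_0\,\nu\k0 G^3$. Combining, and using $G\ge 1$ to dominate $G$ by $G^3$, yields $\|J(u)\|\le c^*\nu\k0 G^3$ with $c^*=(1+\tilde{c}_1)+2\pi c_0\tilde{c}_2$. For the second inequality I invoke the sharp derivative bounds stated in Proposition \ref{limsupAu}, that is $\|u'\|\le c\nu^2\k0^3 G^5$ and $|Au'|\le c\nu^2\k0^4 G^7$; the same use of $h\k0\le 2\pi$ together with $G^5\le G^7$ then gives $\|J(u')\|\le c^*\nu^2\k0^3 G^7$.

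Finally, assembling the $X$-norm: since the trajectory $v(s)=J(u(s))$ has $v'(s)=J(u'(s))$, the definition of $\|\cdot\|_X$ gives
\[
\|J(u)\|_X=\sup_{s}\frac{\|J(u(s))\|}{\nu\k0}+\sup_{s}\frac{\|J(u'(s))\|}{\nu^2\k0^3}\le c^*G^3+c^*G^7\le c^*G^7,
\]
after relabelling $c^*$ and using $G\ge 1$, which is the claimed $R$.

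The computation itself is routine; the only points that require care are, first, checking that $J$ may be applied to $u'$ and that differentiation in $s$ commutes with $J$, and second, controlling the second-order terms $h\,|Au|$ and $h\,|Au'|$ coming from \eqref{double_J}. These last terms are in fact what dictates the powers $G^3$ and $G^7$ appearing in the statement: it is the higher-regularity attractor bounds $|Au|\le c_0\nu\k0^2 G^3$ and $|Au'|\le c\nu^2\k0^4 G^7$, rather than the energy-level bounds $\|u\|\le \nu\k0 G$ and $\|u'\|\le c\nu^2\k0^3 G^5$, that are decisive, and their contribution is kept finite precisely by the $O(1)$ bound on $h\k0$ furnished by the rank hypothesis on $J_h$. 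I therefore expect this last bookkeeping of powers of $G$, rather than any genuine analytic difficulty, to be the main thing to get right.
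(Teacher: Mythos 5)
Your proof is correct and follows essentially the same route as the paper: write $\|J(u)\|=|\nabla J(u)|$, apply \eqref{double_J}, bound $h\kappa_0$ by $2\pi$, and insert the attractor estimates of Proposition \ref{limsupAu}. The only cosmetic difference is that the paper derives $h\kappa_0\le 2\pi$ from the smallness condition \eqref{muhcond} rather than from the rank hypothesis $L/h\ge 1$; both yield the same constant and the rest of the bookkeeping is identical.
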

\begin{proof}
 Since $\|J(\phi)\|= |\nabla J(\phi)|$, we apply \eqref{double_J}, and use the fact that by \eqref{muhcond} we have $h\k0 \le 2\pi$, together with Proposition \ref{limsupAu} to conclude the proof.

\end{proof}

\begin{theorem}\label{MainThm}
 Let $G \ge 1$, and suppose that the conditions of Theorem \ref{wEU} hold for  $\rho=4R$, where $R$ is as in Proposition \ref{J-Att}, $R=c^*G^7$.  Then:

  (i)\quad The vector field in the determining form, \eqref{_v}, is a Lipschitz map from the ball $\mathcal{B}^\rho_X(0)=\{v\in X:\|v\|_X < \rho \}$ into $X$.  Hence, equation \eqref{_v} is an ODE, in the space $X$, which has a short time existence and uniqueness for initial data in   $\mathcal{B}^\rho_X(0)$.

  (ii) \quad Moreover,  the ball $\mathcal{B}^{3R}_X(J(u^*))=\{v\in X:\|v- J(u^*)\|_X < 3R \}\subset\mathcal{B}^\rho_X(0)$ is forward invariant in time, under the dynamics of the determining form \eqref{_v}. Consequently, \eqref{_v} has global existence and uniqueness for all initial data in $\mathcal{B}^{3R}_X(J(u^*))$.

(iii) \quad Furthermore, every   solution of \eqref{_v}, with initial data in $\mathcal{B}^{3R}_X(J(u^*))$, converges to the set of steady states of \eqref{_v}.

(iv) \quad All the steady states of  the determining form, \eqref{_v}, that are contained in the ball $\mathcal{B}^\rho_X(0)$ are  given by the form $v(s)=Ju(s)$, for all $s \in \bR$, where $u(s)$ is a trajectory that lies on the global attractor $\mathcal{A}$ of the NSE \eqref{NSE}.

\end{theorem}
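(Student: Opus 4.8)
The plan is to establish the four assertions in turn, exploiting throughout the \emph{radial} structure of the vector field $F(v)=-\|v-JW(v)\|_{X,0}^{2}\,(v-Ju^{*})$, which is a nonnegative scalar multiple of the displacement $v-Ju^{*}$. For (i), I would factor $F=-P\,Q$, where $P(v)=\|v-JW(v)\|_{X,0}^{2}$ is scalar and $Q(v)=v-Ju^{*}$ is affine, hence $1$-Lipschitz and bounded on $\mathcal{B}_X^{\rho}(0)$. The map $v\mapsto v-JW(v)$ is Lipschitz from $\mathcal{B}_X^{\rho}(0)$ into $X$: the operator $J$ is bounded and linear, while $W$ is Lipschitz by Corollary \ref{W-map}, whose estimate controls $\|\delta\|$, $|A\delta|$ and $\|\delta'\|$ of $\delta=W(v_1)-W(v_2)$ by $\|v_1-v_2\|_X$, and \eqref{double_J} turns these into a bound on $\|J\delta\|_X$. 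Since this map is also bounded on the ball, composing with the locally Lipschitz squaring map shows $P$ is bounded and Lipschitz. The elementary product bound $\|P(v_1)Q(v_1)-P(v_2)Q(v_2)\|_X\le \sup P\,\|v_1-v_2\|_X+\mathrm{Lip}(P)\,(\sup\|Q\|_X)\,\|v_1-v_2\|_X$ then makes $F$ Lipschitz, and the Cauchy--Lipschitz (Picard) theorem in the Banach space $X$ supplies short-time existence and uniqueness.

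For (ii) and (iii), I would use that along any solution the difference $\eta(t):=v(t)-Ju^{*}$ obeys the scalar-coefficient linear equation $\eta'=-c(t)\,\eta$ with $c(t)=\|v(t)-JW(v(t))\|_{X,0}^{2}\ge 0$, so $\eta(t)=\eta(0)\exp(-\int_0^t c)$ and $\|\eta(t)\|_X=\|\eta(0)\|_X\exp(-\int_0^t c)$ is nonincreasing. Since $\|Ju^{*}\|_X\le R$ by Proposition \ref{J-Att} and $3R+R=4R=\rho$, the ball $\mathcal{B}^{3R}_X(J(u^{*}))$ remains inside $\mathcal{B}_X^{\rho}(0)$ and is forward invariant, so the orbit never leaves the region where $F$ is Lipschitz and is therefore global, proving (ii). For (iii), $\|\eta(t)\|_X$ decreases to a limit $\ell\ge0$, and since $\|v'(t)\|_X=c(t)\|\eta(t)\|_X=-\tfrac{d}{dt}\|\eta(t)\|_X$ we get $\int_0^{\infty}\|v'(t)\|_X\,dt=\|\eta(0)\|_X-\ell<\infty$. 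Completeness of $X$ forces $v(t)\to v_\infty$ in $X$; as $\|v'(t)\|_X=\|F(v(t))\|_X\to\|F(v_\infty)\|_X$ by continuity of $F$ while the integral is finite, the limit must be $0$, i.e.\ $F(v_\infty)=0$ and $v(t)$ converges to a steady state.

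For (iv), a steady state $\bar v\in\mathcal{B}_X^{\rho}(0)$ satisfies $\|\bar v-JW(\bar v)\|_{X,0}^{2}(\bar v-Ju^{*})=0$, so either $\bar v=Ju^{*}$ or $\bar v=JW(\bar v)$. The decisive point is that in the latter case the feedback term in \eqref{www} collapses: with $w:=W(\bar v)$ the identity $\bar v=Jw$ gives $Jw-\bar v=0$, so $w$ solves $w'+\nu Aw+B(w,w)=f$, the NSE \eqref{NSE}; by Theorem \ref{wEU} this $w$ satisfies $\sup_s\|w(s)\|<\infty$, hence is a complete trajectory on the global attractor $\mathcal{A}$ by \eqref{attractordef}, and $\bar v=Jw=Ju$. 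The case $\bar v=Ju^{*}$ is the same assertion for the constant trajectory $u\equiv u^{*}\in\mathcal{A}$; indeed substituting $w=u^{*}$ into \eqref{www} and invoking uniqueness gives $W(Ju^{*})=u^{*}$, so this case also obeys $\bar v=JW(\bar v)$. Thus every steady state in the ball has the form $v=Ju$ with $u$ a trajectory on $\mathcal{A}$.

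I expect the main obstacle to be (iii): although the radial structure reduces the Lyapunov bookkeeping to a scalar ODE, one must upgrade the monotone decay of $\|\eta\|_X$ to genuine convergence of the full orbit $v(t)$ in the infinite-dimensional space $X$ (not merely convergence to the steady-state set), which rests on extracting $L^1$-integrability of $v'$ from that decay and on the Lipschitz continuity of $F$ proved in (i). The remaining delicacy is bookkeeping in (i): verifying that the various estimates of Corollary \ref{W-map} and \eqref{double_J} indeed control $\|J\delta\|_X$ in both the sup-of-$\|\cdot\|$ and sup-of-$\|\cdot'\|$ parts of the $X$-norm.
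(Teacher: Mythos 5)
Your proposal is correct and follows essentially the same route as the paper: the same factorization $F=-g^2(v)\,(v-Ju^*)$ with the Lipschitz property of $g(v)=\|v-JW(v)\|_{X,0}$ extracted from Corollary \ref{W-map} and \eqref{double_J}, the same radial/dissipative identity for $v-Ju^*$ giving invariance of $\mathcal{B}^{3R}_X(Ju^*)$ inside $\mathcal{B}^{\rho}_X(0)$ (using $\|Ju^*\|_X\le R$ and $3R+R=\rho$), and the same dichotomy $\bar v=Ju^*$ or $\bar v=JW(\bar v)$ for steady states. Two points of comparison. First, your treatment of (iii) is actually \emph{more} than the paper records: the paper simply asserts that (iii) follows from the dissipativity identities, whereas you integrate $\|v'(t)\|_X=-\tfrac{d}{dt}\|v(t)-Ju^*\|_X$ to get $v'\in L^1(0,\infty;X)$, hence convergence of the orbit to a single limit $v_\infty$ with $F(v_\infty)=0$; this is a clean upgrade from ``converges to the set of steady states'' to convergence to one steady state, and it is sound given that $X$ is complete and $F$ is continuous. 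Second, in (iv) you prove only the forward inclusion (every steady state in the ball is $Ju$ for some trajectory $u\subset\mathcal{A}$), plus the converse for the single trajectory $u\equiv u^*$. The paper also proves the general converse --- that $Ju$ is a steady state for \emph{every} trajectory $u\subset\mathcal{A}$ --- and this is where Proposition \ref{equality} enters: one needs $W(Ju)=u$, i.e.\ that $u$ is the unique bounded solution of \eqref{www} with input $v=Ju$, which is exactly what Proposition \ref{equality} (with the conditions \eqref{mucond2}--\eqref{mucond1}) supplies. The literal wording of (iv) asks only for your direction, but the one-to-one identification of steady states with attractor trajectories claimed in the introduction requires the converse, so you should add that step; it costs one line once Proposition \ref{equality} is invoked.
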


\begin{proof}
Showing (i) implies the  short time existence of \eqref{_v}, for initial data in $\mathcal{B}^\rho_X(0)$.  We write $F(v)=-g^2(v)(v-u^*)$
where $g(v)=\|v-JW(v)\|_{X,0}$.  Since
$$
\|F(v_1)-F(v_2)\|_X \le |g^2(v_1)-g^2(v_2)|\|v_1-u^*\|_X +|g^2(v_2)|\|v_1-v_2\|_X\;.
$$
it suffices to show that $g:\mathcal{B}^\rho_X(0) \to \bR$ is  Lipschitz.
Now, we observe that for $v_1,v_2 \in \mathcal{B}^\rho_X(0)$ we have
\begin{align*}
|\|v_1-JW(v_1)\|_{X,0}-\|v_2-JW(v_2)\|_{X,0} | &\le \|v_1-JW(v_1)-[v_2-JW(v_2)]\|_{X,0} \\
&\le \|v_1-v_2\|_{X,0}+\|JW(v_1)-JW(v_2)\|_{X,0}.
\end{align*}
Next we observe, thanks to \eqref{double_J} and the triangle inequality, that
$$
\|JW(v_1)-JW(v_2)\|_{X,0} \le  (\tc_1+1)\sup_{s\in \bR}\|W(v_1)(s)-W(v_2)(s)\| +\tc_2 h\sup_{s\in \bR}|A(W(v_1)(s)-W(v_2)(s))|\;.
$$
By virtue of Corollary \ref{W-map} and the fact that $h\k0 \le 2\pi$ the above implies
$$
|\|v_1-JW(v_1)\|_{X,0}-\|v_2-JW(v_2)\|_{X,0} | \le C(K)\|v_1-v_2\|_X,
$$
where $K$ depends on $\rho=4R$ as specified in Theorem \ref{wEU}. This completes the proof of (i).

Thanks to Proposition \ref{J-Att} we observe that $\mathcal{B}^{3R}_X(J(u*)) \subset  \mathcal{B}^\rho_X(0)$. Thus we have short time existence for \eqref{_v} with initial data in $\mathcal{B}^{3R}_X(J(u^*))$. The proof of (ii) follows from  the dissipativity property of \eqref{_v}, namely: for every $s \in \bR$ fixed, we have
\begin{align*}
\frac{d\|v(t;s)- J(u^*)\|^2}{dt} &= - 2\|v-JW(v)\|_{X,0}^2 \|v(t;s)-J(u^*)\|^2 \\
\frac{d\|v'(t;s)\|^2}{dt} &= -2\|v-JW(v)\|_{X,0}^2 \|v'(t;s)\|^2\;,
\end{align*}
where $'=d/ds$. This property implies that the ball $\mathcal{B}^{3R}_X(J(u*))$ is forward invariant, for all $t \ge 0$, which proves simultaneously  (ii) and (iii) (see justification concerning the steady set of the determining form \eqref{_v}  below).

To prove part (iv) we observe that the steady states  of equation \eqref{_v} in the ball $\mathcal{B}^\rho_X(0)$ are either $v = Ju^*$, or   $v\in \mathcal{B}^\rho_X(0)$ such that $\left\|v-JW(v)\right\|_{X,0}=0$. In the first case, $u^* \in \mathcal{A}$, since $u^*$ is a steady state of the NSE, \eqref{NSE}. In the second case, we have $v(s)=JW(v)(s)$, i.e., $v(s)=J(w(s))$,    for all $s\in \mathbb{R}$, where $w(s)$ is a solution of \eqref{www}. In this case, it follows from equation \eqref{www}  that $w(s)$ is a bounded solution of the NSE, \eqref{NSE}. Therefore,   one concludes, from \eqref{Grashof}, that $w(\cdot)$ is a trajectory on the global attractor $\mathcal{A}$ of the NSE. Conversely,  since  $\rho = 4R$, it follows from Proposition \ref{J-Att} that  $J(\mathcal{A}) \subset \mathcal{B}^{3R}_X(J(u^*)) \subset \mathcal{B}^\rho_X(0)$. Thus, for every trajectory $u(\cdot)\subset \mathcal{A}$ it follows, from Proposition \ref{equality}  and \eqref{www}, that $u(s) = W(Ju)(s)$, for all $s\in \mathbb{R}$. In particulary, $Ju = JW(Ju)$, and consequently $Ju$ is a steady state of \eqref{_v} in $\mathcal{B}^\rho_X(0)$.
\end{proof}

\section{Proof of Proposition \ref{equality}\label{Prop-equality}}
\begin{proof}
Using \eqref{NSE} and \eqref{w-Att} the difference $\de=w-u$ satisfies
\begin{equation*}
\frac{d\de}{ds}+\nu A\de+B(\de,u)+B(u,\de)+B(\de,\de)=-\mu\nu\kappa_0^2 \cP J\de\;.
\end{equation*}
Suppose $\de(\bar s) \ne 0$ for some $\bar s \in  \bR$.   Since $\de( s)$ is a continuous function with values in $V$, then there is some maximal interval
$(s_0,s_1)$, containing $\bar s$, such that $\de(s) \ne 0$ for all $s \in (s_0,s_1)$.
Taking a scalar product with $A\de$ and using \eqref{ortho}, \eqref{moveu}, \eqref{Titi_1} we have
that  for all $s \in (s_0,s_1)$
\begin{align*}
\frac{1}{2}\frac{d}{ds}&\|\de\|^2+\nu |A\de|^2=-(B(\de,u),A\de)-(B(u,\de),A\de)-\mu \nu\k0^2(\cP J\de,\de)\\
=&(B(\de,\de),Au) -\mu\nu\kappa_0^2 ( J\de-\de,A\de)-\mu\nu\kappa_0^2\|\de\|^2\\
 \leq &c_T\|\de\|^2\left(1+\log \frac{|A\de|^2}{\k0^2\|\de\|^2}\right) |Au|
    +\mu\nu\kappa_0^2c_1 h^2|A\de|^2
    + \mu\nu\k0^2c_2 h\|\delta\||A\delta|-\mu\nu\kappa_0^2\|\de\|^2 \\
\leq &c_T\|\de\|^2\left(1+\log \frac{|A\de|^2}{\k0^2\|\de\|^2}\right) |Au|
    +\mu\nu\kappa_0^2c_J  h^2|A\de|^2 -\frac{1}{2}\mu\nu\kappa_0^2\|\de\|^2    \;.
\end{align*}

Thanks to \eqref{mucond1} we have
\begin{align*}
\frac{d}{ds}\|\de\|^2+\nu\k0^2\left[ \mu+\frac{|A\de|^2}{\k0^2\|\de\|^2}-
\frac{2c_T|Au|}{\nu\k0^2}\left(1+\log \frac{|A\de|^2}{\k0^2\|\de\|^2}\right)
\right]\|\de\|^2 \le 0,
\end{align*}
i.e.,
\begin{align}\label{diffineq}
\frac{d}{ds}\|\de\|^2+\nu\k0^2\left[ \mu + \theta-\alpha (1+\log \theta)\right]\|\de\|^2 \le 0\;,
\end{align}
where
$$
\theta= \frac{|A\de|^2}{\k0^2\|\de\|^2} \ge 1 \qquad \text{and} \qquad
\alpha=\frac{2c_T|Au|}{\nu\k0^2}\;.
$$
We now seek a lower bound on $\psi(\theta)= \theta-\alpha (1+\log \theta)$, for $\theta \ge 1$.
Note that
$$
\psi(1)=1-\alpha \qquad \text{and} \qquad \lim_{\theta \to \infty}
\psi(\theta) =\infty;
$$
and that $\psi$ is decreasing for $\theta < \alpha$, and increasing for
$\theta > \alpha$.  Thus
\begin{align*}
 \min_{\theta \ge 1}\psi(\theta)=\begin{cases}
\psi(1)=1-\alpha \ge 0 &\quad  \text{if} \quad  0 \le \alpha \le 1  \\
\psi(\alpha)=-\alpha\log \alpha \ge 0 &\quad  \text{if} \quad \alpha \ge 1.  \\
\end{cases}
\end{align*}
Now note that for $\alpha \in (0,1]$ we have $1-\alpha \ge -\alpha \log
\alpha$.  Indeed, it is easy to check that $\eta(\alpha)=1-\alpha+\alpha\log\alpha$ satisfies
$\eta(0^+)=1$, $\eta(1)=0$, and $\eta'(\alpha)=\log \alpha \le 0$.  We conclude
that
\begin{align}\label{conclude}
 \min_{\theta \ge 1}\psi(\theta)\ge -\alpha \log \alpha\;.
\end{align}
Applying \eqref{conclude} and then \eqref{limsupintAu} to \eqref{diffineq}, we have
since $u \in \cA$ that
\begin{align*}
\frac{d}{ds}\|\de\|^2+\nu\k0^2\left[ \mu -\frac{2c_T|Au|}{\nu\k0^2} \log (2c_Tc_0 G^3)\right]\|\de\|^2 \le 0\;.
\end{align*}
It follows that
\begin{align*}
\|\de(s) \|^2 \le \exp\left\{\left[-\nu\k0^2\mu+ \frac{6c_T\log(c_3G)
}{s-\sigma_0}\int_{\sigma_0}^s |Au| \ d\tau \right] (s-\sigma_0)\right\}\|\de(\sigma_0)\|^2  \;,
\end{align*}
where $c_3=(2c_Tc_0)^{1/3}$ and $s_0<\sigma_0 < s < s_1$ .   If $s_0 > -\infty$, we have $\de(s_0)=0$,
so we take $\sigma_0 \to s_0^+$ and conclude that
$\de(s)=0$ for $s \in (s_0,s_1)$.  Otherwise, we may apply \eqref{limsupintAu}
to obtain
\begin{align*}
\|\de(s) \|^2 \le \exp\left\{\nu\k0^2\left[-\mu+ 6c_T\log(c_3 G)
G \right] (s-\sigma_0)\right\}\|\de(\sigma_0)\|^2  \;,
\end{align*}
for $|\sigma_0|$ large enough.  Taking $\sigma_0 \to -\infty$, we have  by \eqref{mucond2}  that $\de(s)=0$.
Since $s \in (s_0,s_1)$ is arbitrary, in particular $\de(\bar s)=0$, a contradiction.
\end{proof}

\section{Proof of Theorem \ref{wEU}}\label{MainProof}
In this section we will give a formal  proof of each  estimate stated in Theorem \ref{wEU}. However, we will describe below how to give a rigorous justification for the existence of a solution $w$ of \eqref{www} which satisfies, together with $w'$, these estimates. First, one considers   the following Galerkin approximation system for \eqref{www}:
\begin{equation}\label{G1}
\frac{dw_n}{ds}+\nu Aw_n+ P_n B(w_n,w_n)= P_n f-\mu\nu\kappa_0^2 P_n \cP(Jw_n-v),
\end{equation}
 where $P_n$ is the orthogonal projection from $H$ onto $H_n:={\rm{span}}\{w_1,w_2,\cdots,w_n\}$, the first $n$ eigenfunctions of the Stokes operator $A$.

\begin{proposition}\label{Galerkin} Equation \eqref{G1} has a solution $w_n(s)$, for all $s\in \bR$, which satisfies, together with $\frac{dw_n}{ds}(s)$, all the estimates stated in Theorem \ref{wEU}.
\end{proposition}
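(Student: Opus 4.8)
The plan is to regard \eqref{G1} as a finite-dimensional ODE in $H_n$ and to prove the assertion in three stages: (a) derive a hierarchy of a priori estimates whose constants are independent of $n$ and which exhibit the dissipative structure of \eqref{G1}; (b) use this dissipativity together with the finite-dimensionality of $H_n$ to produce a bounded complete trajectory $w_n:\bR\to H_n$; and (c) read off the bounds of Theorem \ref{wEU} from the estimates in (a). Since the right-hand side of \eqref{G1} is a polynomial, hence locally Lipschitz, vector field on $H_n$, local existence and uniqueness for the initial value problem is automatic; the substance lies in (a) and in producing a solution defined for all $s\in\bR$.

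For (a) I would begin with the $V$-level estimate obtained by taking the $H$-inner product of \eqref{G1} with $Aw_n$. Because $Aw_n\in H_n$ the projection $P_n$ is transparent, and the nonlinear term drops out entirely by \eqref{ortho}. Writing $Jw_n=(Jw_n-w_n)+w_n$ turns the feedback into a genuine damping $-\mu\nu\k0^2\|w_n\|^2$ plus the interpolation error $-\mu\nu\k0^2(Jw_n-w_n,Aw_n)$; the latter is controlled by \eqref{J} (using $|\nabla w_n|=\|w_n\|$ and $|\Delta w_n|=|Aw_n|$) and absorbed into $\nu|Aw_n|^2$ and the damping via \eqref{Young} and the smallness condition \eqref{muhcond}, while the forcing $f$ and the input $v$ (whose size is controlled through $\rho$) are handled by Young's inequality as well. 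This leaves a differential inequality of the form $\frac{d}{ds}\|w_n\|^2+\frac12\mu\nu\k0^2\|w_n\|^2\le C$ with $C$ proportional to $|f|^2/\nu+\mu\nu^3\k0^4\rho^2$. This simultaneously rules out finite-time blow-up and, recalling $G=|f|/(\nu^2\k0^2)$, produces the bound on $\|w_n\|$ with the combination $\rho^2+G^2/\mu$ that defines $K$, together with a bound on the time average of $|Aw_n|^2$.

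The higher-order bounds are then obtained by a bootstrap in which, in contrast with the $V$-estimate, the nonlinearity no longer cancels. Pairing \eqref{G1} with $A^2w_n$ and estimating $(B(w_n,w_n),A^2w_n)=(A^{1/2}B(w_n,w_n),A^{3/2}w_n)$ by \eqref{1_2}--\eqref{1_3} together with the logarithmic inequalities \eqref{Titi_1} and \eqref{Brezis} yields a differential inequality for $|Aw_n|^2$; carrying out the resulting Gronwall argument, in which the governing coefficient is of order $\mu\sim K^2\log K$ by the choice \eqref{finalmucond}, is precisely what generates the super-exponential constant $C(K)=c\exp(cK^2\log K)$ in the bound for $|Aw_n|$. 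Once $|Aw_n|$ is controlled, the $|A^{3/2}w_n|$ bound follows by solving \eqref{G1} for $\nu A^{3/2}w_n=A^{1/2}f-A^{1/2}w_n'-A^{1/2}B(w_n,w_n)-\mu\nu\k0^2A^{1/2}\cP(Jw_n-v)$ and taking norms, which accounts for the explicit $\|f\|/\nu$ term; differentiating \eqref{G1} in $s$ and repeating these estimates gives the bounds on $\|w_n'\|$ and $|Aw_n'|$. The Lipschitz estimates for the difference $\delta=w_1-w_2$ follow identically from the difference equation, whose nonlinear part $B(\delta,w_1)+B(w_2,\delta)$ is linear in $\delta$ with coefficients already bounded in terms of $K$.

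Finally, for (b) I would exploit that the inequality in (a) exhibits a bounded absorbing ball $\mathcal{B}_n\subset H_n$ whose radius is independent of $n$. Solving \eqref{G1} forward from a fixed point of $\mathcal{B}_n$ at initial times $\sigma_j\to-\infty$ gives a family of solutions lying in $\mathcal{B}_n$ for all $s\ge\sigma_j+T_0$ and equi-Lipschitz there (the vector field being bounded on $\mathcal{B}_n$); Arzel\`a--Ascoli then extracts a subsequence converging, uniformly on compact subsets of $\bR$, to an entire solution $w_n$ of \eqref{G1} that inherits all the bounds of (a). Uniqueness of this bounded complete trajectory follows by applying the $V$-estimate to the difference of two such solutions (with $\gamma=0$) and letting the initial time tend to $-\infty$. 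I expect the main obstacle to be step (a) at the level of $|Aw_n|$ and higher: unlike the clean $V$-estimate, these require a delicate balance between the damping $\mu\sim K^2\log K$ and the logarithmically supercritical nonlinear terms, and closing the constants in the precise super-exponential form $C(K)$ while keeping them uniform in $n$ is where the real work lies.
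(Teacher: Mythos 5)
Your proposal is correct and follows essentially the same route as the paper: the paper likewise solves the Galerkin ODE forward from initial data (taken to be zero) at times $-k\nu\kappa_0^2\to-\infty$, invokes the dissipative a priori estimates of Section 5 to rule out finite-time blow-up and obtain bounds uniform in $k$ and $n$, and then passes to an entire bounded solution via Arzel\`a--Ascoli combined with a Cantor diagonal argument. The only minor differences are that the paper controls $|Aw_{n,k}'|$ by realizing $w_{n,k}'$ as the solution of the explicit linearized initial value problem \eqref{G2} with data $P_nf+\mu\nu\kappa_0^2P_n\cP v$, and it derives the $|Aw|$ bound algebraically from the $V$-level inequality and the $\|w'\|$ bound rather than by pairing with $A^2w_n$.
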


\begin{proof}
 Let $k\in \mathbb{N}$, we will supplement \eqref{G1} with an initial value $w_n(-k \nu \k0^2) =0$, to obtain a finite system of ODEs with quadratic polynomial nonlinearity. Therefore, \eqref{G1} with this initial data  possesses a unique solution, which we will denote by $w_{n,k}$, in a  small interval of time around the  initial time $s= -k \nu \k0^2$. Moreover,  $\frac{d w_{n,k}}{ds}$ is the unique solution of   the following initial value problem, in this small time  interval about $s= -k \nu \k0^2$,
\begin{equation}\label{G2}
\begin{aligned}
&\frac{d\tilde{w}_n}{ds}+\nu A\tilde{w}_n+ P_n B(\tilde{w}_n,w_n)+ P_n B(w_n,\tilde{w}_n)=-\mu\nu\kappa_0^2 P_n \cP(J\tilde{w}_n-v'),\\
& \tilde{w}_n(-k\nu \k0^2)= P_n f+\mu\nu\kappa_0^2 P_n \cP v.
\end{aligned}
\end{equation}
Focusing on the interval $[-k\nu \k0^2,\infty)$, one can follow the same steps, as below, for establishing the estimates for $\|w\|$  to show that these estimates are also valid for $\|w_{n,k}(s)\|$, for $s\ge -k\nu \k0^2$. Thus $w_{n,k}(s)$ remains bounded, for $s\ge -k\nu \k0^2$, and as a result it solves \eqref{G1}, for  $s\in [-k\nu \k0^2,\infty)$. Moreover, since $|A (\frac{d w_{n,k}}{ds}(-k\nu \k0^2))| = |A(P_n f+\mu\nu\kappa_0^2 P_n \cP v)|$ is finite and  independent of $k$; one can show, following similar steps for bounding $\| w'\|$ and $|A w'|$,  below, that $|A (\frac{d w_{n,k}}{ds}(s))|$ is bounded uniformly, independent of  $k$, for all $s \in [-k\nu \k0^2,\infty)$.

Now, let  $j\in \mathbb{N}$, then by employing the Arzela-Ascoli  compactness theorem   one can extract a subsequence $w_{n,k(j)}$ of $w_{n,k}$ which converges to $w_n^j$, as $k(j) \to \infty$, a solution of \eqref{G1} on the interval $[-j\nu \k0^2,j\nu \k0^2]$. Moreover, $w_n^j(s)$ and  $\frac{dw_n^j}{ds}(s)$ satisfy the estimates stated in Thereom \ref{wEU}, for all $s \in [-j\nu \k0^2,j\nu \k0^2]$. Now by the Cantor diagonal process one can show that $w_{n,k(k)}$ converges to $w_n$, as $k \to \infty$,  and  $w_n$ satisfies the properties stated in the proposition.
\end{proof}

Now, we continue with our justification. Based on Proposition \ref{Galerkin}   we use the Aubin compactness theorem (see, e.g.,  \cite{CF88,T97}) to show that for every $m \in  \mathbb{N}$ there exists a subsequence $w_{n(m)}$, of $w_n$, which converges to $w^{(m)}$    in the relevant spaces on the interval $[-\nu \k0 m, \nu \k0 m]$, as $n(m) \to \infty$.  Moreover, by  passing to the limit, following arguments similar to those for the 2D NSE,  one infers that $w^{(m)}$ is a solution of  \eqref{www} in the interval $[-\nu \k0 m, \nu \k0 m]$; in addition,     $w^{(m)}$ and  $\frac{dw^{(m)} }{ds}(s)$ satisfy  the estimates stated in Theorem \ref{wEU}, on the interval $[-\nu \k0 m, \nu \k0 m]$. Now, we use, once again,  the Cantor diagonal process to show  that the diagonal subsequence,  $w_{n(n)}$, converges, as $n \to \infty$, to a solution $w$ of \eqref{www}; moreover,  $w$ and $w'$, satisfy the estimates  stated in Theorem \ref{wEU}, for all $s \in \bR$. This in turn  concludes the justification of the formal estimates that will be established below.

\subsection{Bound for $\|w\|$}\label{double_w}
Taking the inner product of \eqref{www} with $Aw$, and using \eqref{ortho} we have
\begin{equation*}
\frac{1}{2}\frac{d}{ds}\|w\|^2+\nu |Aw|^2=
(f,Aw)+\mu\nu\kappa_0^2(v,Aw)-\mu\nu\kappa_0^2(Jw-w,Aw)-\mu\nu\kappa_0^2\|w\|^2\;.
\end{equation*}
Thanks to \eqref{J} we obtain
\begin{equation*}
\begin{aligned}
\frac{1}{2}\frac{d}{ds}\|w\|^2+\nu |Aw|^2 \le& \frac{|f|^2}{2\nu}+\frac{\nu}{2}|Aw|^2+\mu\nu\kappa_0^2\|v\|^2+\frac{\mu\nu\kappa_0^2}{4}\|w\|^2-\mu\nu\kappa_0^2\|w\|^2\\
&+\frac{\mu\nu\kappa_0^2}{4}\|w\|^2+ \mu\nu\kappa_0^2 c_1^2 h^2|Aw|^2+\mu\nu\kappa_0^2 c_2 h^2|Aw|^2,
\end{aligned}
\end{equation*}
and thus
\begin{equation}\label{Aw_bound_1}
\begin{aligned}
\frac{d}{ds}\|w\|^2+{\mu\nu\kappa_0^2}\|w\|^2+\nu\left(1-2\mu h^2\kappa_0^2\left(c_1^2+c_2\right)\right) &|Aw|^2\leq \\
&\frac{|f|^2}{\nu}+2\mu\nu\kappa_0^2\|v\|^2\;.
\end{aligned}
\end{equation}
Therefore, if we assume that $h$ is small enough  to satisfy
\begin{equation}\label{h_bound1}
\begin{aligned}
2\mu h^2\kappa_0^2\left(c_1^2+c_2\right)<\frac{1}{2}
\end{aligned}
\end{equation}
we have, thanks to Gronwall's inequality and the assumption that $\|w(s)\|$ is bounded, the following bound
\begin{equation}\label{bound_w0}
\|w(s)\|^2\leq 2\nu^2\kappa_0^2\|v\|_X^2+\frac{|f|^2}{\mu\nu^2\kappa_0^2}\le \nu^2\kappa_0^2 (2\rho^2+\frac{G^2}{\mu})  \le \nu^2\kappa_0^2 K^2.
\end{equation}
Next, we consider the evolution equation  for $w'=dw/ds$:
\begin{equation}\label{www_prime}
\frac{dw'}{ds}+\nu Aw'+B(w',w)+B(w,w')=-\mu\nu\kappa_0^2 \cP(Jw'-v').
\end{equation}
\subsection{Bound for $\|w'\|$}\label{bound_w'_s}
Taking the inner product of \eqref{www_prime} with $Aw'$, and using \eqref{moveu} and \eqref{J} (after following similar steps as above), one obtains
\begin{equation}\label{bound_w'}
\begin{aligned}
\frac{d}{ds}\|w'\|^2+{\mu\nu\kappa_0^2}\|w'\|^2+&2\nu\left(1-\mu h^2\kappa_0^2\left(c_1^2+c_2\right)\right) |Aw'|^2\\
&\leq
2\left|\B{w'}{w'}{Aw}\right|+2\mu\nu\kappa_0^2\|v'\|^2 \\
&\leq 2\left\|w'\right\|_{L^\infty}\|w'\||Aw|+2\mu\nu\kappa_0^2\|v'\|^2\;.
\end{aligned}
\end{equation}
Now, by applying  \eqref{optima} to $\left\|w'\right\|_{L^\infty}$, we obtain
\begin{equation*}
\begin{aligned}
\left\|w'\right\|_{L^\infty}\|w'\||Aw|&\leq
\calL_N\|w'\|^2|Aw|+(\pi^{1/2}\kappa_0N)^{-1}\|w'\||Aw'||Aw|\\
&\le \calL_N \|w'\|^2|Aw|+\frac{1}{\nu\kappa_0^2\pi N^2}\|w'\|^2|Aw|^2+\frac{\nu}{4}|Aw'|^2.
\end{aligned}
\end{equation*}
Hence by \eqref{h_bound1} we have
\begin{equation*}
\begin{aligned}
&\frac{d}{ds}\|w'\|^2+\alpha\|w'\|^2\leq2\mu\nu\kappa_0^2|v'\|^2 \le
\mu\nu^5\kappa_0^8 K^2,
\end{aligned}
\end{equation*}
where
\[
\alpha=\mu\nu\kappa_0^2-2\calL_N|Aw|-\frac{2}{\nu\kappa_0^2\pi N^2}|Aw|^2.
\]

We have from \eqref{Aw_bound_1} and \eqref{h_bound1} that
\begin{equation}\label{prime}
\frac{\nu}{2}|Aw(s)|^2\leq -\frac{d}{ds}\|w(s)\|^2+\mu\nu^3\kappa_0^4 K^2,
\end{equation}
and, thanks to \eqref{bound_w0}, integrating gives
\[
\int_{s-1/\nu\kappa_0^{2}}^s |Aw(\sigma)|^2 d\sigma\leq 2\left(1+\mu\right)\nu\kappa_0^2 K^2\;.
\]
Applying the Cauchy-Schwarz inequality, we have
\[
 \int_{s-1/\nu\kappa_0^{2}}^s |Aw| d\sigma\leq \left(2(1+\mu)\right)^{1/2}K\;,
\]
and hence
\[
\int^s_{s-1/\nu\kappa_0^{2}}\alpha \geq \mu-2^{1/2}\frac{(8+2\pi\log N)^{1/2}}{\pi}(1+\mu)^{1/2}K-\frac{4(1+\mu)}{\pi N^2}K^2.
\]

We want to make sure that
\begin{align}\label{al}
\int^s_{s-1/\nu\kappa_0^{2}}\alpha(\sigma)\,d\sigma\geq 1, \,\, \text{for all} \,\,  s\in \bR\;.
\end{align}
If we choose $N^2=K(1+\mu)^{1/2}$, then \eqref{al} follows from requiring
\begin{align*}
\pi(\mu-1)\ge 2^{1/2}\left[8+ \pi \log K+\frac{\pi}{2}\log(1+\mu)\right]^{1/2}(1+\mu)^{1/2}K+4(1+\mu)^{1/2}K\;.
\end{align*}
It suffices then to have
\[
\pi^2(\mu-1)^2\ge 4\left[8+\pi \log K+\frac{\pi}{2}\log(1+\mu)\right](1+\mu) K^2 +32 (1+\mu) K^2\;,
\]
which is equivalent to
\[
\frac{(\mu-1)^2}{1+\mu} \ge \frac{64}{\pi^2}K^2 + \frac{4}{\pi}K^2\log K +\frac{2}{\pi}K^2\log(1+\mu)\;.
\]
Using the fact that
\[
\frac{(\mu-1)^2}{1+\mu} \ge \frac{1+\mu}{4}   \quad \forall \ \mu \ge 3, \;
\]
it suffices to have
\[
\frac{1+\mu}{4}\left[1-\frac{8}{\pi}K^2\frac{\log(1+\mu)}{1+\mu}\right]\geq \frac{64}{\pi^2}K^2+\frac{4}{\pi}(\log K)K^2 \;.
\]
Since
\[
a\geq 2b\log b,\quad b\geq 9\quad \text{implies}\quad\frac{a}{\log a}\geq b
\]
we set $b=16K^2/\pi$ and $a=1+\mu$, so that if
\begin{align}\label{sweet16}
1+\mu\geq \frac{32K^2}{\pi} \log (16K^2/\pi)\;,
\end{align}
it suffices to have
\[
\frac{1+\mu}{8}\geq \frac{64}{\pi^2}K^2+\frac{4}{\pi}K^2\log K \;.
\]
Thus, to ensure that \eqref{al} holds, we may take
\begin{equation}\label{cond32}
\mu \ge 80 K^2\log K\;.
\end{equation}
 By a similar calculation,
again taking $N^2=K(1+\mu)^{1/2}$,  we have for $0 \le  r \le 1$
 \begin{align*}
 \int_{s-r/\nu\kappa_0^2}^s\alpha(\tau) \, d\tau &\geq r \mu- \left[ \frac{4}{\pi}
- r^{1/2}\frac{2^{1/2}}{\pi}\left(8+\pi\log K +\frac{\pi}{2}\log(1+\mu)\right)^{1/2}\right](1+r\mu)^{1/2}K\;.
 \end{align*}
Thus whenever
\begin{equation}\label{mu_bound3}
\mu\leq c' K^2 \log(c' K)
\end{equation}
 for some $c'$ we have
 \begin{align}
\sup_{0\leq r\leq 1}\exp\left(-\int_{s-r/\nu\kappa_0^2}^s\alpha\right)\le \exp(c K^2\log K) \;,\label{33}
\end{align}
for some absolute constant c.  Ultimately, we will choose $c'$ to be compatible with \eqref{cond32} (see \eqref{finalrepeat}).



\begin{lemma}\label{32}
 Let $\beta \ge 0$ be a constant, and let $y(s)\geq 0$ be an absolutely continuous bounded function satisfying the inequality    $y'+\alpha y\leq\beta$,  for all $s\in \bR$. Suppose also that $\int_{s-1/\nu\kappa_0^2}^s\alpha(\sigma) \, d\sigma \geq 1$, for all $s\in \bR$. Then
 \[
 y(s)\leq 2\frac{\beta}{\nu\kappa_0^2}  \sup_{0\leq r\leq 1}\exp\left(-\int_{s-r/\nu\kappa_0^2}^s\alpha(\sigma) \, d\sigma\right), \,\, \text{for all} \,\, s\in \bR.
\]
 \end{lemma}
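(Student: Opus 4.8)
The plan is to integrate the differential inequality against its natural integrating factor and then exploit the hypothesis $\int_{s-1/\nu\kappa_0^2}^s\alpha\ge 1$ to kill the contribution coming from the remote past. Throughout write $T=1/(\nu\kappa_0^2)$ and abbreviate $M(s)=\sup_{0\le r\le 1}\exp\big(-\int_{s-rT}^s\alpha\big)$, so the claimed bound reads $y(s)\le 2\beta T\,M(s)$. First I would record the variation-of-constants estimate: since $\frac{d}{d\tau}\big(y(\tau)\exp(\int_{s_0}^\tau\alpha)\big)=(y'+\alpha y)\exp(\int_{s_0}^\tau\alpha)\le\beta\exp(\int_{s_0}^\tau\alpha)$ for a.e.\ $\tau$, integrating over $[s_0,s]$ gives, for every $s_0<s$,
\[
y(s)\le y(s_0)\exp\Big(-\int_{s_0}^s\alpha\Big)+\beta\int_{s_0}^s\exp\Big(-\int_\tau^s\alpha\Big)\,d\tau .
\]

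The heart of the argument is a pointwise bound on the kernel $\exp(-\int_\tau^s\alpha)$ over each backward window. For $\tau\in[s-(k+1)T,\,s-kT]$ I would write $\tau=s-aT$ with $a\in[k,k+1]$ and set $m=\lfloor a\rfloor\ge k$; peeling $m$ consecutive unit-length windows off the \emph{old} end yields
\[
\int_\tau^s\alpha=\sum_{j=0}^{m-1}\int_{s-(a-j)T}^{\,s-(a-j-1)T}\alpha+\int_{s-(a-m)T}^s\alpha\ \ge\ m+\int_{s-(a-m)T}^s\alpha,
\]
where each full window obeys $\int_{p-T}^p\alpha\ge1$ by hypothesis and the leftover piece has length $(a-m)T<T$. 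Since $a-m\in[0,1)$, exponentiating gives $\exp(-\int_\tau^s\alpha)\le e^{-m}M(s)\le e^{-k}M(s)$, uniformly in $\tau$ on that window. The crucial point --- and the step I expect to require the most care --- is that the full windows must be stripped from the end \emph{away} from $s$, so that the single remaining partial window abuts $s$ and is therefore controlled by $M(s)$ itself rather than by $M(s-kT)$; this is exactly what keeps the final bound expressed through the supremum at the point $s$.

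With this kernel estimate the rest is routine. Taking $s_0=s-nT$, the boundary term is at most $\big(\sup_\sigma y(\sigma)\big)\,e^{-n}\to 0$ as $n\to\infty$, using boundedness of $y$ together with $\int_{s-nT}^s\alpha\ge n$; hence $y(s)\le\beta\int_{-\infty}^s\exp(-\int_\tau^s\alpha)\,d\tau$. Splitting this integral into the backward windows and inserting the kernel bound gives
\[
y(s)\le\beta\sum_{k=0}^\infty T\,e^{-k}M(s)=\frac{\beta T}{1-e^{-1}}\,M(s)\le 2\beta T\,M(s)=\frac{2\beta}{\nu\kappa_0^2}\,M(s),
\]
since $(1-e^{-1})^{-1}<2$, which is precisely the asserted inequality. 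Local integrability of $\alpha$ (inherited from the setting in which the lemma is applied) guarantees $M(s)<\infty$ and the absolute convergence of all the integrals and series above.
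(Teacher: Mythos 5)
Your proof is correct and follows essentially the same route as the paper's: variation of constants, then peeling unit-length windows off the end away from $s$ so that the single leftover partial window abuts $s$ and is controlled by the supremum, followed by summing the geometric series $\sum e^{-k}$ with $(1-e^{-1})^{-1}\le 2$. The only (minor) difference is that you justify dropping the boundary term at $s_0\to-\infty$ explicitly via the boundedness of $y$, which the paper leaves implicit.
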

 \begin{proof} Note that since $\int_{s-1/\nu\kappa_0^2}^s\alpha(\sigma) \, d\sigma \geq 1$,  we have that $\int_{-\infty}^0\alpha=+\infty$.
Multiplying  $y'+\alpha y\leq\beta$ by the integrating factor $\exp(\int_0^s{\alpha(\sigma) \, d\sigma })$ and integrating from $-\infty$ to $s$ we obtain
 \begin{equation*}
\begin{aligned}
y(s)&\leq \beta\int^{s}_{-\infty} \exp\left(-\int_\sigma^s\alpha \right)d\sigma\\
&=\beta\int_{-\infty}^s\exp\left(-\int_{\sigma+\lfloor(s-\sigma)\nu\kappa_0^2\rfloor/\nu\kappa_0^2}^s\alpha \right)\prod_{k=1}^{\lfloor(s-\sigma)\nu\kappa_0^2\rfloor}\exp\left(-\int_{\sigma+(k-1)/\nu\kappa_0^2}^{\sigma+k/\nu\kappa_0^2}\alpha\right)d\sigma \\
&\le \beta \sup_{0\leq r\leq 1}\exp\left(-\int_{s-r/\nu\kappa_0^2}^s\alpha\right) \int_{-\infty}^s \prod_{k=1}^{\lfloor(s-\sigma)\nu\kappa_0^2\rfloor} e^{-1} d \sigma \\
&= \beta \sup_{0\leq r\leq 1}\exp\left(-\int_{s-r/\nu\kappa_0^2}^s\alpha\right) \int_{-\infty}^s e^{-\lfloor(s-\sigma)\nu\kappa_0^2\rfloor} d \sigma \\
&= \beta \sup_{0\leq r\leq 1}\exp\left(-\int_{s-r/\nu\kappa_0^2}^s\alpha\right)\left(1+e^{-1}+e^{-2}+\dots\right)/\nu\kappa_0^2 \\
&= \frac{2\beta}{\nu\kappa_0^2} \sup_{0\leq r\leq 1}\exp\left(-\int_{s-r/\nu\kappa_0^2}^s\alpha\right)\;.
 \end{aligned}
\end{equation*}
 \end{proof}

 We have from Lemma \ref{32}, \eqref{al}, and  \eqref{33} that
 \begin{equation}\label{wprime}
 \|w'(s)\|\leq c\nu^2\kappa_0^3  \exp\left(cK^2\log K\right), \,\, \text{for all} \,\, s\in \bR, \, \text{for some} \,  c>0.
 \end{equation}



\subsection{Bounds for $|Aw|$ and $|A^{3/2}w|$}\label{bound_A3/2}
From \eqref{prime} we obtain
\[
\frac{\nu}{2}|Aw|^2\leq 2\|w'\|\|w\|+\mu\nu^3\kappa_0^4 K^2\leq \frac{1}{\nu \k0^2}\|w'\|^2+\nu \k0^2 \|w\|^2+\mu\nu^3\kappa_0^4 K^2,
\]
hence, by  \eqref{wprime} and \eqref{bound_w0} we have
\begin{equation}\label{a_exp}
\sup_{s\in \bR} |Aw(s)|\leq c\nu\kappa_0^2\exp(cK^2\log K).
\end{equation}\label{a_exp2}
From \eqref{www} we obtain:
\[
A^{1/2}w'+\nu A^{3/2}w+A^{1/2}B(w,w)=A^{1/2}f-\mu\nu\kappa_0^2 A^{1/2}\cP J w+\mu\nu\kappa_0^2A^{1/2}\cP v,
\]
and thus, using \eqref{1_2} yields
\begin{equation*}
\begin{aligned}
\nu|A^{3/2}w|\leq \|w'\|+\|f\|+\mu\nu\kappa_0^2\|v\|+\mu\nu\kappa_0^2\|\cP Jw\|+
c'\left(|A^{3/4}w|^2+|w|^{1/2}|Aw|^{3/2}\right).
\end{aligned}
\end{equation*}
Since $h\k0 \le c$ we have from \eqref{double_J}
\[
\|\cP Jw\|\leq c\left(\|w\|+h|Aw|\right) \leq c \nu \k0 \exp(cK^2\log K).
\]
Thus, using also \eqref{mu_bound3} and \eqref{a_exp},
\[
\sup_{s\in \bR}|A^{3/2}w(s)|\leq \frac{1}{\nu}\|f\|+\nu\kappa_0^3c\exp(cK^2\log K).
\]
\subsection{Bound for $|Aw'|$}\label{bound_Aw'}

Taking the inner product of $A^2w'$ with the following equation
\begin{equation*}
\frac{dw'}{ds}+\nu Aw'+B(w',w)+B(w,w')=-\mu\nu\kappa_0^2 \cP(Jw'-v'),
\end{equation*}
implies
\begin{equation*}
\begin{aligned}
\frac{1}{2}\frac{d}{ds}|Aw'|^2+\nu|A^{3/2}w'|^2\leq& |(B(w',w),A^2w')|+|(B(w,w'),A^2w')|-\mu\nu\kappa_0^2|Aw'|^2 \\
&+\mu\nu\kappa_0^2\|Jw'-w'\||A^{3/2}w'|+\mu\nu\kappa_0^2\|v'\||A^{3/2}w'|.
\end{aligned}
\end{equation*}
We bound the first nonlinear term using \eqref{Young} and \eqref{1_3} to obtain
\begin{align*}
|(B(w',w),A^2w')|&=|A^{1/2}B(w',w)| |A^{3/2} w'|
  \nonumber \\
 &\le \frac{1}{\nu} |A^{1/2}B(w',w)|^2 + \frac{\nu}{4} |A^{3/2} w'|^2 \\
 &\le \frac{c}{\nu}\left(|A^{1/2}w'||A^{1/2}w|^{1/2}|A^{3/2} w|^{1/2}+|A^{1/4}w'||A^{5/4}w|\right)^2+ \frac{\nu}{4} |A^{3/2} w'|^2
\end{align*}
For the second nonlinear term, we
integrate by parts, using the fact
that $A=-\Delta$ under periodic boundary conditions, so that by \eqref{flip},\eqref{ortho},
\eqref{Ladyzhenskaya}, \eqref{Sobolev}, and \eqref{Young}, we have
\begin{align*}
|(B(w,w'),A^2w')|&= |\int_\Omega  ( (w\cdot \nabla) w') \cdot \Delta^2 w' \ dx|  \nonumber \\
 &\le |\int_\Omega  ( (\Delta w\cdot \nabla) w') \cdot \Delta w' \ dx|+ 2|\sum_{j=1}^2\int_\Omega  ( (\partial_j w\cdot \nabla) \partial_j w' )\cdot \Delta w' \ dx|   \nonumber \\
 &\le c \|\triangle w\|_{L^4} |\nabla w'| \|\Delta w'\|_{L^4}  + c\sum_{j,k=1}^2|\nabla w| \|\partial_j\partial_k w'\|_{L^{4}}^2
  \nonumber \\
 &\le c \|\nabla^2 w\|_{H^{1/2}} \|w'\| \|\Delta w'\|_{H^{1/2}}+ c\sum_{j,k=1}^2|\nabla w| \|\partial_j\partial_k w'\|_{H^{1/2}}^2
  \nonumber \\
 &\le c |Aw |^{1/2} |A^{3/2} w|^{1/2}  \|w'\| |Aw'|^{1/2} |A^{3/2} w'|^{1/2}  + c\| w\| |A w'| |A^{3/2} w'|
  \nonumber \\
 &\le \frac{c}{\nu}|Aw | |A^{3/2} w|  \|w'\|^2 |Aw'| + \frac{c}{\nu}\| w\|^2 |A w'|^2 +
\frac{\nu}{4} |A^{3/2} w'|^2 \\
 &\le \frac{c}{\nu^2\k0^2}|Aw |^2 |A^{3/2} w|^2  \|w'\|^4 + \nu\k0^2(1+cK^2)|Aw'|^2  +
\frac{\nu}{4} |A^{3/2} w'|^2. \label{partsareparts}
\end{align*}

Now, using \eqref{1_3}, \eqref{double_J} and the bounds for $\|w'\|$ and $|A^{3/2}w|$, we obtain by Gronwall's inequality 
the following uniform bound
\begin{equation}\label{a^2_prime_exp}
|Aw'(s)|\leq \kappa_0 c\exp(cK^2\log K)(\|f\|+\nu^2\k0^3), \,\, \text{for all}\,\, s\in \bR,
\end{equation}
provided $\mu \ge 2(1+K^2)$ (which, in turn follows from \eqref{cond32}).
\subsection{Lipschitz property of $w$ and $w'$ in the $D(A)$ norm}\label{ExistSec}
In this section we show that the bounded solutions of \eqref{www} are unique, and depend continuously on the input trajectory $v\in X$, in a sense that will be specified below. In particular, these  properties are instrumental for introducing a well-defined map $v \mapsto W(v)$, from the space $X$ to a space of trajectories, which is defined by $W(v)(s)=w(s)$, for all $s\in \mathbb{R}$.

To achieve these properties one considers the difference, $\delta(s)=w_1(s)-w_2(s)$, between two trajectories $w_1(s)$ and $w_2(s)$, and establishes similar estimates for $\delta$ and $\delta'$ as was done for $w$ and $w'$. Indeed, by linearity the only complication is in the nonlinear term.
Let   $\gamma=v_1-v_2$, and  $\tilde{w}=w_1+w_2$. Then we have
\begin{equation}\label{delta}
\frac{d}{ds}\delta+\nu A\delta +\frac{1}{2}B(\tilde{w},\delta)+\frac{1}{2}B(\delta,\tilde{w})=
-\mu\nu\kappa_0^2\cP\left(J\delta-\gamma\right).
\end{equation}

\subsection{Bound for $\|\delta\|$}
Taking the scalar product of \eqref{delta} with $A\delta$ we obtain, as in Section \ref{double_w},
\begin{equation*}
\begin{aligned}
\frac{1}{2}\frac{d}{ds}\|\delta\|^2&+\nu |A\delta|^2+\mu\nu\kappa_0^2\|\delta\|^2=\mu\nu\kappa_0^2(\gamma,A\delta)-\mu\nu\kappa_0^2(J\delta-\delta,A\delta)\\
&-\frac{1}{2}\left(B(\tilde{w},\delta),A\delta\right)-\frac{1}{2}\left(B(\delta,\tilde{w}),A\delta\right)\\
\le&\  \mu\nu\kappa_0^2\|\gamma\|^2+\frac{\mu\nu\kappa_0^2}{4}\|\delta\|^2+(c_T+c_B)\|\delta\|\|\tilde{w}\|\left(\log\frac{e|A\delta|}{\kappa_0\|\delta\|}\right)^{1/2}|A\delta|\\
&+\frac{\mu\nu\kappa_0^2}{4}\|\delta\|^2+ \mu\nu\kappa_0^2 c_1^2 h^2|A\delta|^2+\mu\nu\kappa_0^2 c_2 h^2|A\delta|^2,
\end{aligned}
\end{equation*}
where for the nonlinear terms we used \eqref{Titi_1} and \eqref{Brezis}.
Applying Young's inequality to the contribution from the nonlinear terms, we have, if \eqref{mucond1} holds, that (using \eqref{bound_w0})
\begin{equation*}
\frac{d}{ds}\|\delta\|^2+\nu\kappa_0^2\left[\mu+\frac{|A\delta|^2}{\kappa_0^2\|\delta\|^2}-8(c_T+c_B)^2 K^2\left(1+\log{\frac{|A\delta|^2}{\kappa_0^2\|\delta\|^2}}\right)\right]\|\de\|^2\leq2\mu\nu\kappa_0^2\|\gamma\|^2
\end{equation*}
At this point we can use \eqref{conclude} with $\theta=|A\delta|^2/\kappa_0^2\|\delta\|^2$ and $\alpha=8(c_T+c_B)^2 K^2$  to obtain
\[
\frac{d}{ds}\|\delta\|^2+\nu\kappa_0^2\left[\mu-8(c_T+c_B)^2 K^2\log \left(8(c_T+c_B)^2 K^2\right)\right]\|\de\|^2\leq2\mu\nu\kappa_0^2\|\gamma\|^2\;.
\]
So if
\begin{equation}\label{mu_bound1}
\mu>32(c_T+c_B)^2 K^2\log \left(\sqrt{8}(c_T+c_B) K\right)\;,
\end{equation}
we have that
\begin{equation}\label{lip_1}
\sup_{s\in \bR}\|\delta(s)\|\leq 4\nu\kappa_0 \|\gamma\|_X.
\end{equation}
To ensure compatibility with \eqref{cond32} and \eqref{mu_bound1} we take,
as in \eqref{finalmucond}
\begin{align}\label{finalrepeat}
c_4 K^2\log \left(c_5 K^2\right)< \mu < 2c_4 K^2\log \left(c_5 K^2\right),
\end{align}
where $c_4=80(c_T+c_B+1)^2$, and $c_5=\sqrt{8}(c_T+c_B+1)$.

\subsection{Bound for $\|\delta'\|$}
The equation for $\delta'$ is
\begin{equation*}
\begin{aligned}
&\frac{d}{ds}\delta'+\nu A\delta' +\frac{1}{2}B(\tilde{w}',\delta)+\frac{1}{2}B(\tilde{w},\delta')+\frac{1}{2}B(\delta',\tilde{w})
+\frac{1}{2}B(\delta,\tilde{w}')=-\mu\nu\kappa_0^2\left(J\delta'-\gamma'\right).
\end{aligned}
\end{equation*}
Taking the scalar product with $A\delta'$, we obtain
\begin{equation*}
\begin{aligned}
\frac{1}{2}\frac{d}{ds}\|\delta'\|^2+\nu |A\delta'|^2&-\frac{1}{2}\left(B(\delta',\delta'),A\tilde{w}\right)+
\frac{1}{2}\left(B(\tilde{w}',\delta),A\delta'\right)+\frac{1}{2}\left(B(\delta,\tilde{w}'),A\delta'\right)\\
&=-\mu\nu\kappa_0^2\|\delta'\|^2+\mu\nu\kappa_0^2(\gamma',A\delta')-
\mu\nu\kappa_0^2(J\delta'-\delta',A\delta').
\end{aligned}
\end{equation*}
Note that the difference with \eqref{bound_w'} (when one changes $w'$ to $\delta'$ and and $w$ to $\tilde{w}$) is in the addition of two terms
$\left(B(\tilde{w}',\delta),A\delta'\right)$ and $\left(B(\delta,\tilde{w}'),A\delta'\right)$, so if we can show that they are bounded by a constant multiple of $\|\gamma\|_X$, we can then obtain the Lipschitz property of $w'$ by applying the same methods as in subsection \ref{bound_w'_s}\;.

We begin with
\begin{align*}
\frac{d}{ds}\|\de'\|^2&+{\mu\nu\kappa_0^2}\|\de'\|^2+2\nu\left[1-\mu h^2\kappa_0^2\left(c_1^2+c_2\right)\right] |A\de'|^2 \nonumber \\
&\le \left|\B{\de'}{\de'}{A\tilde{w}}\right|+
\left|\left(B(\tilde{w}',\delta),A\delta'\right)\right|+\left|\left(B(\delta,\tilde{w}'),A\delta'\right)\right|+2\mu\nu\kappa_0^2\|\gamma'\|^2 .
\end{align*}
We have
\begin{align*}
\left|\B{\de'}{\de'}{A\tilde{w}}\right|&\leq
\left\|\de'\right\|_{L^\infty}\|\de'\||A\tilde{w}| \\
& \le \calL_N\|\delta'\||A\tilde{w}|+(\pi^{1/2}\k0 N)^{-1}\|\delta'\| |A\delta'||A\tilde{w}| \\
& \le \calL_N\|\delta'\||A\tilde{w}|+\frac{\|\delta'\|^2 |A\tilde{w}|^2}{2\pi\nu\k0^2 N^2}+
\frac{\nu}{2}|A\delta'|^2 \;,
\end{align*}
and (from \eqref{A46a})
\begin{equation*}
\begin{aligned}
\left|\left(B(\delta,\tilde{w}'),A\delta'\right)\right| &\leq \csubL |\de|^{1/2}\|\de\|^{1/2}\|\tilde{w}'\|^{1/2}|A\tilde{w}'|^{1/2}|A\de'|\\
&\leq
\frac{\nu}{4}|A\de'|^2+\csubL^2\nu^{-1} \kappa_0^{-2}\|\de\|^2|A\tilde{w}'|^2,
\end{aligned}
\end{equation*}
as well as (from \eqref{A46b})
\begin{equation*}
\begin{aligned}
\left|\left(B(\tilde{w}',\delta),A\delta'\right)\right|\leq \csubA |\tilde{w}'|^{1/2}|A\tilde{w}'|^{1/2}\|\de\||A\de'|\leq
\frac{\nu}{4}|A\de'|^2+\csubA^2 \nu^{-1}\kappa_0^{-2}\|\de\|^2|A\tilde{w}'|^2.
\end{aligned}
\end{equation*}
Thus
\begin{equation*}
\begin{aligned}
\frac{d}{ds}\|\de'\|^2+&{\mu\nu\kappa_0^2}\|\de'\|^2+\nu\left[1-2\mu h^2\kappa_0^2\left(c_1^2+c_2\right)\right] |A\de'|^2\\
&\le \calL_N\|\delta'\||A\tilde{w}|+\frac{\|\delta'\|^2 |A\tilde{w}|^2}{2\pi\nu\k0^2 N^2}+2\mu\nu\kappa_0^2\|\gamma'\|^2+(\csubA^2+\csubL^2)\nu^{-1} \kappa_0^{-2}\|\de\|^2|A\tilde{w}'|^2 \\
&\le \calL_N\|\delta'\||A\tilde{w}|+\frac{\|\delta'\|^2 |A\tilde{w}|^2}{2\pi\nu\k0^2 N^2}+2\mu\nu^5\kappa_0^8\|\gamma\|^2_X+(\csubA^2+\csubL^2)\nu^{-1} \kappa_0^{-2}\|\de\|^2|A\tilde{w}'|^2.
\end{aligned}
\end{equation*}
By \eqref{mucond1}, we may drop the term in $|A\delta'|^2$.  Then using
 \eqref{a^2_prime_exp} on $|A\tilde{w}'|\leq |Aw_1'|+|Aw_2'|$, along with
 \eqref{lip_1}, we have
 $$
  \frac{d}{ds}\|\de'\|^2 + \alpha \|\delta'\|^2 \le \beta\;
  $$
  where
  \begin{align*}
  \alpha=\mu\nu \k0^2 -\calL_N\|\delta'\||A\tilde{w}|+\frac{|A\tilde{w}|^2}{2\pi\nu\k0^2 N^2}
\end{align*}
and
 \begin{align*}
  \beta=2\mu\nu^5\k0^8\|\gamma\|^2_X+32c\nu\k0^2(\csubA^2+\csubL^2)\exp(cK^2\log K)(\|f\|+\nu^2\k0^3)^2\|\gamma\|_X^2\;.
  \end{align*}
 Proceeding as in subsection \ref{bound_w'_s}, we can obtain
\begin{equation*}
\sup_{s\in \bR}\|\de'(s)\|\leq (\|f\|+\nu^2\kappa_0^3) C\|\gamma\|_X,
\end{equation*}
where $C = c\exp(cK^2\log K)$, for some universal constant $c$.
Note that once again \eqref{mu_bound3} suffices to ensure that \eqref{al} holds,
so there is no need to modify the range for $\mu$ in \eqref{mu_bound3}.

\subsection{Bound for $|A\de|$}
We have from \eqref{delta}, using \eqref{A46a}, \eqref{A46b} and \eqref{J}
\begin{equation*}
\begin{aligned}
 \nu|A\de|\leq |\de'|&+\frac{(\csubA+\csubL)}{2\kappa_0}|A\tilde{w}|\|\de\|+
\mu\nu\kappa_0^2|\gamma|\\
&+\mu\nu\kappa_0^2|\de|+\mu h\nu c_1\kappa_0^2\|\de\|+\mu h^2 c_2\nu \kappa_0^2|A\de|
\end{aligned}
\end{equation*}
hence
\begin{equation*}
\begin{aligned}
\nu(1-\mu h^2 c_2 \kappa_0^2)|A\de|\leq \kappa_0^{-1}\|\de'\|+\mu\nu\kappa_0\|\gamma\|
\left[\mu\nu\kappa_0(1+hc_1\kappa_0)+\frac{(\csubA+\csubL)}{2\kappa_0}|A\tilde{w}|\right]\|\de\|
\end{aligned}
\end{equation*}
So if \eqref{mucond1} holds, we have that (by \eqref{a_exp}, \eqref{lip_1})
\begin{equation*}
\sup_{s\in \bR}|A\de(s)|\leq \frac{C}{\nu\kappa_0} (\|f\| + \nu^2\kappa_0^3) \|\gamma\|_X,
\end{equation*}
$C = c\exp(cK^2\log K)$, for some universal constant $c>0$.

\section{Acknowledgements}

 The work of C.F. is supported in part by NSF grant DMS-1109784, that of M.S.J.
by  DMS-1008661 and DMS-1109638 and that of E.S.T.  by DMS-1009950, DMS-1109640 and DMS-1109645, as well as the Minerva Stiftung/Foundation.


\begin{thebibliography}{99}
\bibitem{Azouani-Olson-Titi} A. Azouani, E. Olson and E.S. Titi,  Continuous data assimilation using general interpolant observables, {\it Jour. of Nonlinear Science}, (to appear).

\bibitem{ATi} A. Azouani, E.S. Titi,
 Feedback control of nonlinear dissipative systems by finite
 determining parameters - a reaction-diffusion paradigm, \underline{arXiv:1301.6992}.

\bibitem{Batchelor} G.K. Batchelor, {\it The Theory of Homogeneous Turbulence}, Cambridge at the University Press, Cambridge, 1953

\bibitem{Brezis-Gallouet} H. Br\'ezis and T. Gallouet,
Nonlinear Schr\"odinger evolution equations,
{\it Nonlinear Anal.}, {\bf 4:4} (1980), 677--681.

\bibitem{Cockburn-Jones-Titi} B. Cockburn, D. Jones and E.S. Titi,
  Estimating the number of asymptotic degrees of freedom
for nonlinear dissipative systems,  {\it  Mathematics of Computation},
{\bf 66} (1997),  1073--1087.

\bibitem{CF88} P. Constantin and C. Foias, {\it Navier-Stokes Equations}, University of Chicago Press, Chicago, IL, 1988.

\bibitem{CFT89}  P. Constantin, C. Foias and R. Temam,  On the dimension of the attractors in two- dimensional turbulence, {\it Phys. D}, {\bf 30} (1989), 284--296.


\bibitem{F97} C. Foias,  What do the Navier-Stokes equations tell us about turbulence? in {\it Harmonic Analysis and Nonlinear Differential Equations} (Riverside, CA, 1995), {\it Contemp. Math.}, {\bf 208} (1997), 151--180.

\bibitem{FJKrT1} C. Foias, M. Jolly, R. Kravchenko and E.S. Titi,  A determining form for the 2D Navier-Stokes equations - the Fourier modes case, {\it  Journal of Mathematical Physics}, \textbf{53} (2012), 115623.

\bibitem{FJLRYZ} C. Foias, M. Jolly, R. Lan, R. Rupam, Y. Yang,
Analyticity with higher norm estimates  for the 2D Navier Stokes equations, (submitted).

\bibitem{Foias2001} C. Foias, O. Manley, R. Rosa and R. Temam,
{\it Navier--Stokes Equations and Turbulence},
Encyclopedia of Mathematics and Its Applications, Vol. 83,
Cambridge University Press, 2001.

\bibitem{FP}
C. Foias, C., and G. Prodi,
 Sur le comportement global des solutions non-stationnaires des
  \'equations de {N}avier-{S}tokes en dimension {$2$}, {\em Rend. Sem. Mat. Univ. Padova}, {\bf 39} (1967), 1--34.

\bibitem{FST}
C. Foias, G.R. Sell,  and R. Temam,
 Inertial manifolds for nonlinear evolutionary equations, {\em J. Differential Equations 73}, {\bf 2} (1988), 309--353.

\bibitem{FSTi} C. Foias, G.R. Sell and E.S. Titi,
  {Exponential tracking and approximation of inertial
manifolds for dissipative nonlinear equations},  {\it  Journal of
Dynamics and Differential  Equations},  {\bf 1}  (1989),  199--244.

\bibitem{FT2} C. Foias and R. Temam,
 Asymptotic numerical analysis for the {N}avier-{S}tokes equations. In {\em Nonlinear Dynamics and Turbulence}, J.~Barenblatt, Iooss, Ed.
  Pitman Advanced Pub. Prog., Boston, 1983.

\bibitem{FT1}
C. Foias and R. Temam,
 Determination of the solutions of the {N}avier-{S}tokes equations by
  a set of nodal values,  {\em Math. Comput.}, {\bf 43} (1984), 117--133.


\bibitem{Hale} J. Hale, {\em Asymptotic Behavior of Dissipative Systems},
 (Mathematical Surveys and Monographs) {\bf 25} American Mathematical Society, Providence, 1988.

\bibitem{Jones-Titi1992} D.A. Jones, E.S. Titi,
Determining finite volume elements for the 2D
Navier--Stokes equations, {\it Physica D}, {\bf 60} (1992), 165--174.

\bibitem{Jones-Titi1993} D.A. Jones, E.S. Titi,
Upper bounds on the number of determining modes, nodes
and volume elements for the Navier--Stokes equations,
{\it Indiana Univ. Math. J.}, {\bf 42:3} (1993), 875--887.

\bibitem{Kraichnan} R.H. Kraichnan, Inertial ranges in two-dimensional turbulence, {\it Phys. Fluids}, {\bf 10} (1967), 1417--1423.

\bibitem{Leith}  C.E. Leith, Diffusion approximation for two-dimensional turbulence, {\it Phys. Fluids}, {\bf 11} (1968), 671--673.

\bibitem{Liu} V.X. Liu, A sharp lower bound for the Hausdorff dimension of the global attractors of the 2D Navier-Stokes equations, {\it Commun. Math. Phys.} {\bf 158} (1993), 327--339.

\bibitem{T97} R. Temam, {\em Infinite-dimensional Dynamical Systems in Mechanics and Physics}, second~ed., vol.~68 of {\em Applied Mathematical Sciences}. Springer-Verlag, New York, 1997.

\bibitem{Titi}  E.S. Titi,
On a criterion for locating stable stationary solutions to
the Navier--Stokes equations, {\it Nonlinear Analysis, Theory,
Methods and Applications},  {\bf 11} (1987), 1085--1102.





\end{thebibliography}
\end{document}